\journalname{Operational Research}
\begin{document}

\title{Decomposition Algorithm for the Multi-Trip Single Vehicle Routing Problem with AND-type Precedence Constraints}

\author{
Mina Roohnavazfar \and \\
Seyed Hamid Reza Pasandideh
}

\institute{
	Mina Roohnavazfar \at
    Department of Industrial Engineering, Kharazmi University - Tehran (Iran) and\\
	Department of Control and Computer Engineering, Politecnico di Torino - Turin (Italy) \\
	\email{mina.roohnavazfar@polito.it}
	\and
	Seyed Hamid Reza Pasandideh \at
	Department of Industrial Engineering, Faculty of Engineering, Kharazmi University - Tehran (Iran)\\
	\email{shr\_pasandideh@khu.ac.ir} \textbf{(corresponding author)}
}

\date{Received: date / Accepted: date}

\maketitle

\begin{abstract}
This paper addresses a new variant of the multi-trip single vehicle routing problem where the nodes are related to each other through AND-type precedence constraints. The problem aims at determining a sequence of trips to visit all the nodes respecting every precedence constraint within and among the routes so to minimize the total traveling cost. Our motivation comes from routing problems where a node may have a set of predecessors (not just single one proposed in the dial-a-ride or pickup and delivery problems) resulting in a set of pairwise relations that specify which customers need to be visited before which other ones. We develop three Mixed Integer Programming (MIP) models to formulate the problem. The models are experimentally compared to determine the best one. Moreover, a solution approach based on the Logic-Based Benders Decomposition (LBBD) algorithm is developed which allows to decompose the original problem into an assignment master problem and independent sequencing subproblems. A new valid optimality cut is devised to achieve faster convergence. The cut performance is experimentally investigated by comparing with a recently proposed one in the literature. We further relax the algorithm to find the sub-optimal solution and demonstrate its efficiency. Extensive computational experiments are conducted to assess the proposed algorithms in terms of solution quality and CPU time.

\keywords{Capacitated single vehicle routing problem \and AND-Type Precedence constraints \and Multiple trips \and Logic based benders decomposition algorithm}
\end{abstract}

\section{Introduction}\label{intro}
The Capacitated Vehicle Routing Problem (CVRP) has been a subject of research for many years due to economic importance of the problem, and also the methodological challenges it poses. The classical version of CVRP seeks an optimal nodes assignment to vehicles and optimizes the sequence of nodes within each route. To better represent real-world problems, additional attributes need to be considered leading to different variants of the CVRP.

There exists vast literature on the Multi-Trip Vehicle Routing Problem (MTVRP). In some of them, the fleet is composed of one single vehicle with limited capacity allowed to perform multiple trips, like  helicopters. An important reason for this gain of interest can be of enormous economic importance to some
companies. The standard VRP assumes single use of vehicles over a planning period. However, in several contexts, like home delivery of fresh good, the duration of the routes is short, therefore they must be combined to form a complete workday. In applications where the vehicle capacity is small (electrical vehicles or small-sized vans) or the planning period is large, performing more than one route per
vehicle may be the only practical solution. The MTVRP has become increasingly important, also, due to the advent of electronic services, like e-groceries, where customers can order goods online and have them delivered at home. In such applications, the vehicle is allowed for multiple trips to avoid an oversized fleet.

In many real industrial or social service environments where a set of tasks or services are
performed, partial orders known as Precedence Constraints (PCs) are defined  that specify which nodes need to be visited before which other ones. The PCs play an essential role in a wide range of applications in various fields, such as the assembly industry, distribution of services, products, or passengers, construction projects, production scheduling, and maintenance support. In general cases, precedence constraints are represented by an arbitrary directed acyclic graph in which each arc corresponds to a precedence constraint between a pair of nodes.

In logistics, it is also tough to suppose that the customer nodes are visited independent of each other. Some customers/target locations may have priorities over the others to be served/visited
due to their interconnections as in the Dial-A-Ride or Pickup and Delivery problems. In these problems, each node is visited at most after just one predecessor. For instance, in the Dial-A-Ride problem, for any backhaul node $j$, there is a particular inhaul node $i$ where the relation $i<j$ (we refer to it as conventional precedence constraint) must be met within a route. In practice,
there are cases in which the conventional PCs are not suitable. So, alternative definitions, such as AND precedence, OR precedence, and S-precedence are defined. AND-type precedence constraints are defined when a node has multiple predecessors which have been met before. For example, given node $i$, a set of predecessors $\{j,k,l\}$ is considered which, consequently, result in a set of pairwise relations $\{(j<i),(k<i),(l<i)\}$.

A practical applications of AND-type precedence constraints can be visiting patients by medical personal during the evacuation of a special needs population in anticipation of a disaster. In such a situation, some patients have priorities (due to their urgency or importance) to be visited before some others before they can be transported to the shelters. Another application might be freight transportation by helicopter or ship where some locations need to be reached after some others.

Let's consider a delivery problem in which the order of a customer includes various items collected from different locations. In such a situation the customer must be visited in a route only after (not necessarily immediately) visiting the locations of requested goods. This situation can also be seen in order picker routing problem where a picker walks or drives through the warehouse to collect the requested items and put them in a roll container considering fragility restrictions, stackability, shape, size and weight. For example, to prevent damage to light items, pickers are not
allowed to put heavy items on top of light items. Such physical features and also preferred loading or unloading sequences (to avoid extra effort on sorting and packing the collected items at the end of the retrieving process) can be represented as AND-type PCs that specify which items should be collected before a given one within a trip or among them.

To the best of our knowledge, there is no available paper in the literature of VRP in which the nodes are visited satisfying AND-type PCs along and among the trips. In this research, we describe the Multi-Trip Capacitated Single Vehicle Routing Problem with AND-type Precedence Constraints where the main objective is to find the optimal sequence of routes with the minimum total travelling cost. The nodes are related with AND-type precedence relations which must be respected not only within a route but also among them by determining the order of trips. To address the proposed problem, three mathematical formulations are developed. The performance of the models are experimentally investigated on a set of generated instances. Moreover, a decomposition algorithm based on the Logic-Based Benders Decomposition (LBBD) approach is developed which partitions the original problem into an assignment master problem and sequencing sub-problems. A new extension of a valid optimality cut is developed, and its performance is experimentally investigated. The cuts are generated from the solutions of subproblems and added to the master problem to achieve faster convergence. We also propose a relaxed version of LBBD which allows the algorithm to find feasible solution in less CPU time and even for larger instances.

The contribution of this study is threefold: (i) considering AND-type precedence constraints in the multi-trip capacitated single vehicle routing problem; (ii) three mixed integer programming models are developed and experimentally compared on a set of generated instances; (iii) a logic-based benders decomposition algorithm with a new valid optimality cut and its relaxed version are designed and implemented capable of obtaining good solutions in terms of both quality and computational time.

This paper is organized as follows. Section \ref{Literature} is dedicated to the literature review of the problem. In Section \ref{description}, the proposed problem is described and three mathematical formulations are provided. Moreover, the computational results of the proposed models in solving a set of instances are represented in this part. In Section \ref{LBBD}, we develop the LBBD algorithm to solve the problem. In Section \ref{Computational experiments}, the computational results of the proposed algorithms are provided. Finally, the paper ends with conclusions and some future research suggestions in Section \ref{conclusion}.

\section{Literature review} \label{Literature}
The literature on the proposed problem can be distinguished into three main research areas. The first one deals with the precedence constraints with the main focus on the routing problem. The second part discusses researches on the multi-trip single vehicle routing problem, while the last one includes the studies proposing the logic based benders decomposition algorithm.

\subsection{Problems with Precedence Constraints}\label{Lit PC}
In many real operational research problems where a set of tasks or services are performed, it is tough to suppose that the tasks are independent of each other. In principle, the relative order between a couple of tasks is represented as a pairwise relation named Precedence Constraint (PC). These relations make the problems have a more comprehensive range of applications in various fields. Given nodes $i$ and $j$, precedence constraint $i<j$, referred to a conventional precedence constraint, can be interpreted as activity $i$ is performed before $j$. In practice different precedence constraints are defined such as 'AND', 'OR', 'S' types. The AND-type precedence constraint means that a node can be reached only after visiting all of its predecessors. The OR-type precedence constraint is defined when a node has multiple predecessors, and it can be reached when at least one of its predecessors has been met before. Under the S-type PC, a task can be started once all of its predecessors have been started. So, the task does not need to wait for the completion of its predecessors.

In the context of machine scheduling, Soft Precedence Constraints (SPC) have been defined by a bipartite network at a transshipment port \cite{Zhang et al. 2020}. The SPCs can be violated, but with a certain penalty.
Goldwasser and Motwani \cite{Goldwasser and Motwani 1999} derive inapproximability results for a specific single-machine scheduling problem with AND/OR precedence constraints.
Gillies and Liu \cite{Gillies and Liu 1995} addressed single and parallel machine scheduling problems to meet deadlines considering different structures of AND precedence constraints. They proved NP-completeness of finding feasible schedules in many polynomially solvable settings with only AND-type precedence constraints. Moreover, they give priority-driven heuristic algorithms to minimize the completion time on a multiprocessor.
Mohring et al. \cite{Mohring et al. 2004} provided some algorithms for the more general and complex model of AND/OR precedence constraints. They showed that feasibility and questions related to generalized transitivity could be solved using the same linear-time algorithm. Moreover, they discussed a natural generalization of AND/OR precedence constraints and prove that the same problems become NP-complete in this setting.
Lee et al. \cite{Lee et al. 2012} focused on flexible job-shop scheduling problems with AND/OR precedence constraints in the operations. They provided a MILP model able to find optimal solutions for small-sized instances. They also developed a heuristic algorithm that results in a good solution for the problem regardless of its size. Moreover, a schedule builder who always gives a feasible solution and genetic and tabu search algorithms based on the proposed schedule builder were presented.
Van Den Akker et al. \cite{Van Den Akker et al. 2005} developed a solution framework which provides feasible schedules to minimize the minimax type on a set of identical parallel machines subject to release dates, deadlines, AND/OR precedence constraints. They determined a high quality lower bound by applying column generation to the LP-relaxation.
Agnetis et al. addressed some special cases of job shop and flow shop scheduling problems with S-type precedence constraints \cite{Agnetis et al. 2019}. They provided polynomial exact algorithms for a two-machine job shop and a two-machine flow shop, and an m-machine flow shop with two jobs.

In the context of routing problems, Moon et al. \cite{Moon et al. 2002} addressed the Traveling Salesman Problem with Precedence Constraints (TSPPC) defined as pair-wise relations between each couple of nodes. The PCs are represented as an order under which the nodes must be visited. A genetic algorithm that involves a topological sort and a new crossover operation is proposed to solve the model. Mingozzi et al. \cite{Mingozzi et al.1997} dealt with the TSP with time windows and precedence constraints using a dynamic programming approach. Fagerholt and Christiansen \cite{Fagerholt and Christiansen 2000} considered a TSPPC with a time window to solve the bulk ship scheduling problem. The model is solved as a shortest path problem on a graph. Renaud et al. \cite{Renaud et al. 2000} proposed a heuristic model to solve the pickup and delivery TSP formulated as the TSPPC. Bredstrom and Ronnqvist \cite{Bredstrom and Ronnqvist 2008} developed a mathematical model for the combined vehicle routing and scheduling problem with time windows. The sets of pairwise synchronization and precedence constraints are considered between customer visits, independently of the vehicles. Also, they described some real-world problems to emphasize the importance of the mentioned constraints, such as homecare staff scheduling, airline scheduling, and forest operations. Bockenhauer et al. \cite{Bockenhauer et al. 2013} studied a variant of TSP in which a given subset of nodes are visited in a prescribed order in the computed Hamiltonian cycle. They presented a polynomial-time algorithm to solve the problem. Haddadene et al. \cite{Haddadene et al. 2016} modeled a home health care structure as a variant of vehicle routing problem with time windows and timing constraints. Some patients ask for more than one visit simultaneously or in given priority order. A MILP model, a greedy heuristic, two local search strategies, and three metaheuristics are proposed to solve the problem. Recently, the task assignment problem for a team of heterogeneous vehicles has been investigated in which packages are delivered to a set of dispersed customers subject to precedence constraints. Using graph theory, a lower bound on the optimal time is constructed. Integrating with a topological sorting technique, several heuristic algorithms are developed to solve it \cite{Bai et al. 2019}.

A  closely related case of the VRP with PCs is the Dial-A-Ride problem, which is an exhaustively studied problem. In this problem, the pair-wise relations are inherently represented between pickup and delivery points within a route, i.e., for any backhaul node $j$, there is a particular inhaul node $i$ where the PC  $(i<j)$ must be met within a route. For a comprehensive survey of the developed models, applications, and algorithms which address the Dial-A-Ride problems, the reader is referred to \cite{Ho et al. 2018}, \cite{Molenbruch et al. 2017}, and \cite{Cordeau and Laporte 2007}.

The order-picking problem is one of the main applications of AND-type PCs in the context of routing problems. However, little works in order-picking problems have focused on PCs. Zulj et al. \cite{Zulj et al. 2018} considered the PCs in a warehouse of a German manufacturer of household products, where heavy items are not allowed to be stored on top of delicate items to prevent damage to the delicate items. To avoid the sorting effort at the end of the order-picking process, they propose a picker-routing strategy respecting the precedence constraints. An exact algorithm based on dynamic programming is used to evaluate the strategy and compared with the simple s-shape routing strategy. Dekker et al. \cite{Dekker et al. 2004} investigated combinations of storage assignment strategies and routing heuristics for a real case arising in a warehouse of a wholesaler of tools and garden equipment. A guideline has to be considered indicating that fragile products have to be picked last. Matusiak et al. \cite{Matusiak et al. 2014} presented a simulated annealing method to address the joint order batching and precedence-constrained picker-routing problem in a warehouse with multiple depots. The shortest path through the warehouse is determined using an exact algorithm.
Chabot et al. \cite{Chabot et al. 2017} introduced the order-picking routing problem underweight, fragility, and category constraints. They propose a capacity-index and two-index vehicle-flow formulations as well as four heuristics to solve the problem. Furthermore, a branch-and-cut algorithm is applied to solve the two mathematical models.

It should be noted that the proposed precedence relations in most picker routing problems (mentioned above) have been represented as a pre-specified sequence of nodes. However, in our proposed problem, AND-type PCs are defined under which the nodes assignments to trips and the sequence of nodes in each route are determined in the model.

\subsection{Multi-Trip Single Vehicle Routing Problem}\label{Lit MTS}
In the Multi-Trip Vehicle Routing Problem (MTVRP), each vehicle is able to perform several trips during the planning horizon. In situations where the vehicle capacity is small or applications, such as home delivery of perishable goods like food, the duration of the routes is short, therefore, the vehicles can travel several trips to complete a workday. In 1990, this problem was introduced by Fleischmann (FLE 1990). A survey on this problem can be found in \cite{Sen2018} and \cite{Cattaruzz2016}, \cite{Cattaruzz2018} which give a unified view on mathematical formulations, exact and heuristic approaches, and variants of MTVRP.

In the literature, there are some papers that have been studied multi-trip single vehicle routing problems. Following, we refer to a few ones. Martinez-Salazar et al. \cite{Martinez-Salazar et al. 2014} studied a customer-centric routing problem with multiple trips of a single vehicle with the goal of minimizing the total waiting time of customers (latency). They proposed two mixed integer formulations as well as a metaheuristic algorithm capable of obtaining good solutions in terms of quality and computational time. Azi et al. \cite{Azi et al. 2007} addressed a multi-trip single vehicle routing considering time windows and deadline constraints. Taking into account that the time windows constraints may prevent serving all customers, the objective was maximization of the number of served customers and minimization of total distance. They proposed a method based on an elementary shortest path algorithm with resource constraints. Rivera et al. \cite{Rivera et al. 2016} worked on the multi-trip cumulative capacitated single vehicle routing problem inspired by disaster logistics. The objective is minimization of total arrival time. Two mixed integer linear programming models, a low-based formulation and a set partitioning problem, are proposed for small instances, while for the larger instances an exact algorithm based on resource-constrained shortest path problem is developed. Angel-Bello et al. \cite{Angel-Bello et al. 2013} considered a routing problem with multiple use of a single vehicle and service time in demand points with the aim of minimizing the sum of clients waiting time to receive service. They present and compare two mixed integer formulations for this problem, based on a multi–level network.

\subsection{Logic Based Benders Decomposition algorithm}\label{Lit benders}
The LBBD has mostly aroused interest among the researchers in the planning and scheduling field. However, few papers in vehicle routing area have focused on this approach. Here, we can imply to the some of the most cited studies which uses this approach in various optimization problems. Implementation of the LBBD on parallel machine scheduling problem with sequence-dependent setup times and job availability intervals has yielded in outstanding results in comparison with integer programming (IP) and constraint programming (CP) as proposed in \cite{Gedik et al.2016}. A three-level LBBD solves the outpatient scheduling problem efficiently which involves planning and scheduling decisions in \cite{Riise et al.2016}. Multi-distributed operating room scheduling (DORS) problem is addressed using LBBD proposed by Roshanaei et al. \cite{Roshanaei et al.2017}. The algorithm was enhanced using effective acceleration cuts led to faster convergence. Their proposed problem tackles with allocation of patients as well as scheduling of operating rooms. The study performed by Barzanji et al. \cite{Barzanji et al.2020} showed that the proposed LBBD algorithm has outstanding results over the existing heuristic-based search methods for the type-1 of integrated process planning and scheduling (IPPS) problem named flexible job shop scheduling with process plan flexibility problem. Simultaneous planning, lot sizing and scheduling problem is studied by Martinez-Salazar et al. \cite{Martinez-Salazar et al. 2014}. They proposed an integrated branch-and-check with MIP heuristic based on the logic-based Benders platform. The LBBD is deployed effectively in the export dry bulk terminals problem which involves two allocation problems as planning and scheduling parts (see, \cite{Unsal2019}).
Recently, the LBBD was applied to address the heterogeneous fixed fleet vehicle routing problem with time windows with the aim of cost minimization \cite{Faganello Fachini and Armentano 2020}. Valid optimality and feasibility cuts were devised to guarantee the convergence of the algorithm. Extensive computational experiments illustrated the effectiveness of the suggested approach.

\subsection{Summarizing reviewed literature}\label{Lit summary}
In this section, we provide Table \ref{table-literature} to summarize the related literature. As shown, no available research in the literature of vehicle routing and even picker routing problems considers AND-type PCs.

\begin{table}
\caption{Summarized literature reviewed in this section}
\label{table-literature}
\scalebox{0.7}{
\begin{tabular}{llll}
\hline\noalign{\smallskip}
\textbf{Publication} & \textbf{Problem}  & \textbf{Precedence Constraints} & \textbf{Solution Approach}\\
\noalign{\smallskip}\hline\noalign{\smallskip}
\cite{Rivera et al. 2016}&  multi-trip single vehicle routing  & -  & exact algorithm \\
 \cite{Azi et al. 2007}   &  multi-trip single vehicle routing  & -  & exact algorithm \\
 \cite{Angel-Bello et al. 2013} &  multi-trip single vehicle routing  & -  & Cplex solver \\
 \cite{Martinez-Salazar et al. 2014} & multi-trip single vehicle routing & -  & meta-heuristic algorithm \\
 \cite{Van Den Akker et al. 2005} & parallel machine scheduling & AND/OR type PCs & column generation \\
 \cite{Zhang et al. 2020}& machine scheduling & Soft type PCs & approximation algorithms \\
 \cite{Lee et al. 2012}&  flexible job-shop scheduling & AND/OR type PCs &heuristic algorithm \\
 \cite{Gillies and Liu 1995}&   machine scheduling & AND type PCs  &heuristic algorithms \\
 \cite{Agnetis et al. 2019} &  machine scheduling  &  S type PCs & exact algorithms\\
 \cite{Goldwasser and Motwani 1999}& assembly sequencing & AND/OR type PCs & - \\
 \cite{Moon et al. 2002} & travelling salesman problem & conventional PCs & genetic algorithm \\
 \cite{Mingozzi et al.1997} & travelling salesman problem & conventional PCs    & dynamic programming \\
 \cite{Renaud et al. 2000} & pickup and delivery TSP & conventional PCs  & heuristic model \\
 \cite{Bredstrom and Ronnqvist 2008} & combined vehicle routing and scheduling&  conventional PCs  & -\\
 \cite{Bockenhauer et al. 2013} & travelling salesman problem  & pre-specified sequence & heuristic \\
 \cite{Haddadene et al. 2016} & vehicle routing problem  & given priority order & meta-heuristics and heuristic \\
 \cite{Bai et al. 2019}  & assignment problem & conventional PCs   &  heuristic \\
 \cite{Zulj et al. 2018} & order-picking  & pre-specified sequence & dynamic programming \\
 \cite{Dekker et al. 2004} & storage assignment and routing   & pre-specified sequence  & heuristics \\
 \cite{Matusiak et al. 2014}  & order batching and picker-routing & pre-specified sequence & meta-heuristic \\
 \cite{Chabot et al. 2017} &  order-picking routing & pre-specified sequence &branch-and-cut and heuristics \\
 \cite{Gedik et al.2016}  & Machine scheduling & - & LBBD \\
 \cite{Riise et al.2016}  & multi-modal outpatient scheduling & - & LBBD \\
 \cite{Roshanaei et al.2017} & operating room scheduling  &  - & LBBD \\
 \cite{Barzanji et al.2020}  & integrated planning and scheduling  & - & LBBD \\
 \cite{Martinez-Salazar et al. 2014} & planning, lot sizing and scheduling & - & LBBD \\
 \cite{Unsal2019}            & planning and scheduling & -  & LBBD \\
 \cite{Faganello Fachini and Armentano 2020} & vehicle routing & - & LBBD \\
  \textbf{current research} &  \textbf{multi-trip single vehicle routing}  & \textbf{And type PCs}  & \textbf{LBBD} \\
\noalign{\smallskip}\hline
\end{tabular}}
\end{table}

As mentioned before, the most related research in logistics includes dial-a-ride or pickup and delivery problems which only focuses on the pairwise relations between a couple of nodes (inhaul and backhaul nodes) being satisfied within a route. However, we want to stress the fact that assuming such relations, under which at most one predecessor per node is defined which must be visited in the same route, can be restrictive in many real-life situations. In this regard, we propose the AND-type PCs represented on a directed acyclic graph, where given a node, a set of pairwise relations are met within and among the trips. Thus, our work contributes to the literature by describing, modeling and providing solution approaches for Multi-Trip Capacitated Single Vehicle Routing Problem with AND-type PCs.

\section{Problem Description and mathematical formulation} \label{description}
The proposed problem is a generalization of the CVRP in which the nodes are related to each other using AND-type precedence constraints, and the fleet is composed of a single vehicle capable of travelling multiple trips per working day. The problem aims at determining a sequence of trips required to visit all the nodes, assigning nodes to the trips, and finding the nodes sequence in each trip so to minimize the total travelling cost.

The proposed problem is defined on a directed acyclic graph $G=(N,A,c)$, where  $N=\{0,1,2,…,n\}$ is a set of geographically located nodes including the depot (node $0$), and $A=\{(i,j):i,j \in N, i\neq j\}$ is the set of arcs. For each arc $(i,j)$, a travel cost $c_{ij}$ is associated. The cost matrix is symmetric, i.e., $c_{ij}=c_{ji}, \forall i,j \in V, i\neq j$. Given node $i$, set $AND_i$ is defined including the AND-type predecessors of node $i$. The proposed PCs are respected between nodes within and among the trips. Given PC $i<j$, if node $i$ and $j$ are, respectively, assigned to trips $s$ and $k$, then trip $s$ is done before starting trip $k$ by the vehicle. In such a way, the PC $i<j$ is met among the trips. As an example, let’s consider the nodes and the AND-type precedence constraints as depicted in Figure \ref{fig-example}. It can be seen that the vehicle travels three trips to visit all the nodes. According to the defined PCs, the set of AND-type predecessors of node $r$ includes $\{g,c\}$. As depicted, both predecessors are visited before node $r$ in the first trip of vehicle. Concerning AND predecessors of node $a$, it can be noticed that both nodes $k$ and $m$ are visited before node $a$ in such a way that node $k$ is assigned to the same trip as node $a$, while node $m$ has been visited in previous trip. Finally, two nodes $h$ and $p$ are not allocated to the same trip as node $u$. However, the related PCS are met while both nodes are visited before node $u$ in the previous trip.

\begin{figure}
  \includegraphics[width=0.6\textwidth]{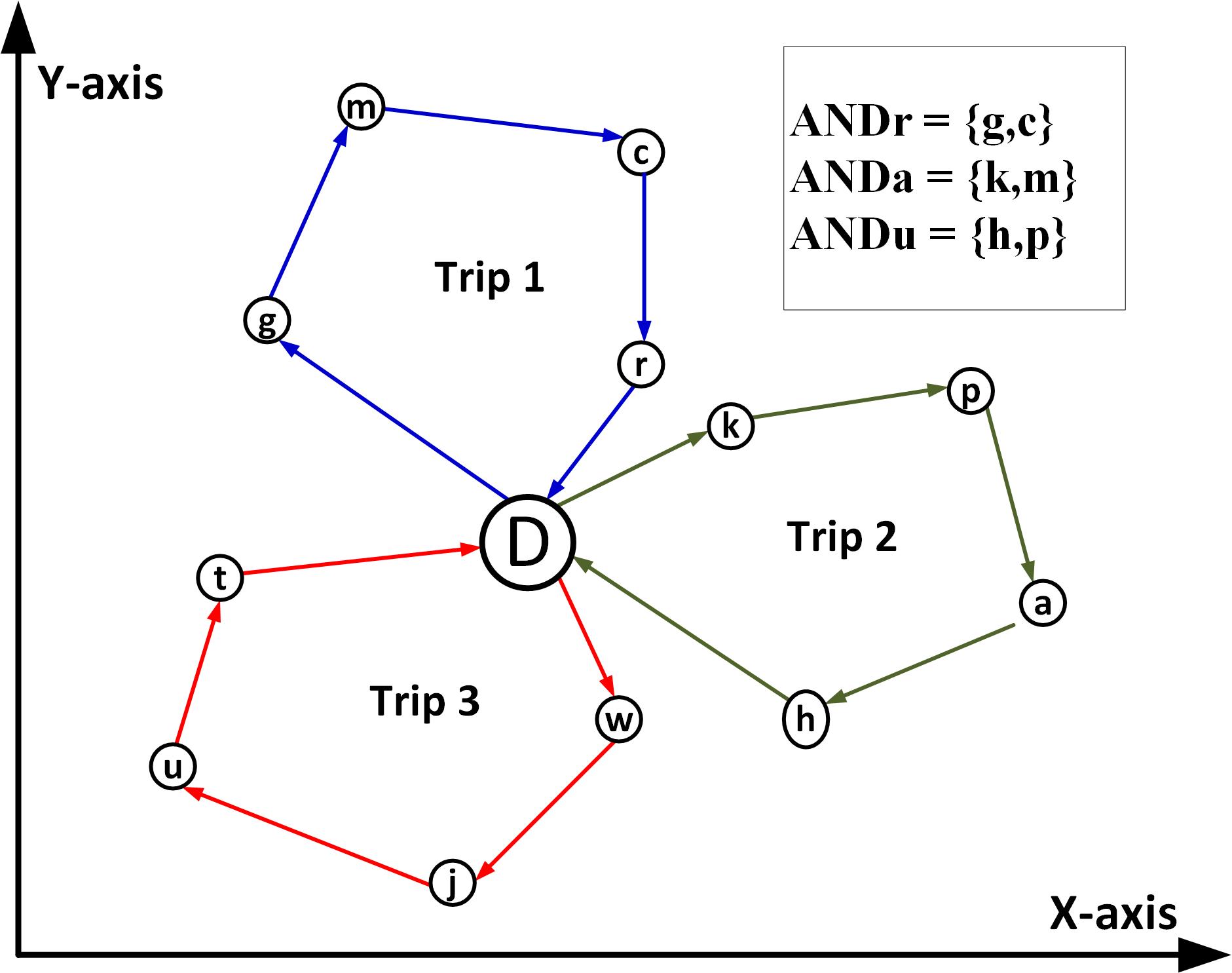}
\caption{Illustration of an example with AND-type PCs respecting within and among the trips} \label{fig-example}
\end{figure}

Let’s assume the following restrictions:
\begin{itemize}
\item Each trip starts and ends at the depot;
\item Each node is visited once;
\item Total demand of each route does not exceed the capacity of the vehicle;
\item Number of trips cannot exceed the Maximum value $p$;
\item PCs are met between nodes within and among the trips.
\end{itemize}

In the next subsections, three mathematical formulations are developed to address the proposed problem. Let us introduce the following notations and parameters:
\begin{itemize}
\item $N=\{0,1,2,...,n\}$: set of nodes including depot;
\item $\acute{N}=\{0,1,...,n,n+1,...,n+p\}$: set of nodes including $p$ dummy depots;
\item $c_{ij}$: travelling cost from node $i$ to node $j$;
\item $p$: upper bound for the number of trips;
\item Q: capacity of the vehicle;
\item $d_i$: demand of node $i$;
\item $M$: a big scalar;
\item $AND_i$: set of predecessors of node $i \in N \setminus\{0\}$.
\end{itemize}

\subsection{Three-index MIP model} \label{3-index}
In this section, the Three-index MIP model of the proposed problem is represented. Let us introduce following decision variables:\\

$x_{ijr}$: binary variable takes value $1$ if arc $(i,j)$ is linked in trip $r$, $0$ otherwise;

$s_{ir}$: integer variable indicating the position of node $i$ in trip $r$;

$u_r$: binary variable takes value $1$ if the vehicle travels trip $r$, $0$ otherwise.\\

Then, the model can be stated as:
\begin{equation}\label{obj-3index}
  Z1= \min \sum_{r=1}^{p} \mathop{{\sum_{j=0}^{n}} \sum_{i=0}^{n}}_{i\neq j} c_{ij} x_{ijr}
\end{equation}
subject to
\begin{equation} \label{1-3index}
\begin{aligned}
  \sum_{j=1}^{n} x_{0jr}=u_r  &&&&&&  \forall r \in \{1,...,p\},
\end{aligned}
\end{equation}
\begin{equation} \label{2-3index}
\begin{aligned}
  \sum_{i=0,i\neq j}^{n} \sum_{r=1}^{p} x_{ijr}=1  &&&&&&  \forall j \in N \setminus\{0\},
\end{aligned}
\end{equation}
\begin{equation}\label{3-3index}
\begin{aligned}
  \sum_{i=0}^{n}x_{ijr}= \sum_{i=0}^{n} x_{jir}  &&&&&&  \forall j \in N \setminus\{0\}, \forall r \in \{1,...,p\},
\end{aligned}
\end{equation}
\begin{equation}\label{4-3index}
\begin{aligned}
  \mathop{\sum_{i=0}^{n} \sum_{j=1}^{n}}_{i\neq j} d_j x_{ijr}\leq Q .u_r  &&&&&&  \forall r \in \{1,...,p\},
\end{aligned}
\end{equation}
\begin{equation} \label{5-3index}
\begin{aligned}
u_{r} \leq u_{\acute{r}} &&&&&& \forall r,\acute{r} \in \{1,...,p\}_{r> \acute{r}},
\end{aligned}
\end{equation}
\begin{equation}\label{6-3index}
\begin{aligned}
  \sum_{i=0}^{n} \sum_{k=1}^{r} x_{iek} \geq \sum_{i=0}^{n} x_{isr}  &&&&&& \forall e \in AND_s, r \in \{1,...,p\},
\end{aligned}
\end{equation}
\begin{equation}\label{7-3index}
\begin{aligned}
  s_{ir}+ 1 -M (1-x_{ijr})\leq s_{jr}  &&&&&& \forall i\neq j \in N, r \in \{1,...,p\},
\end{aligned}
\end{equation}
\begin{equation}\label{8-3index}
\begin{aligned}
  s_{ir}+ 1 \leq s_{jr}  &&&&&& \forall i \in Pr_j, r \in \{1,...,p\},
\end{aligned}
\end{equation}
\begin{equation}\label{9-3index}
\begin{aligned}
0 \leq s_{ir} \leq \mathop{\sum_{i=0}^{n} \sum_{j=1}^{n}}_{i\neq j} x_{ijr}  &&&&&& \forall i \in N, r \in \{1,...,p\},
\end{aligned}
\end{equation}
\begin{equation}\label{10-3index}
\begin{aligned}
 s_{ir} \geq 0 &&&&&& \forall i \in N, r \in \{1,...,p\},
\end{aligned}
\end{equation}
\begin{equation}\label{11-3index}
\begin{aligned}
 x_{ijr}, u_r \in \{0,1\} &&&&&& \forall i,j \in N, r \in \{1,...,p\},
\end{aligned}
\end{equation}\\
Equation (\ref{obj-3index}) is the objective function which minimizes the total travelling costs. Equations (\ref{1-3index}) state that each trip starts from the depot. Equations (\ref{2-3index}) represent that each node is visited exactly once in just one trip. Constraints (\ref{3-3index}) declare that the vehicle, in each trip, enters and leaves a node exactly once. Vehicle capacity is guaranteed by equations (\ref{4-3index}) in each trip. Using constraints (\ref{5-3index}), the sequence of vehicle trips is determined so that trip $r$ cannot be done unless trip $r-1$ has been performed before. Precedence constraints are met in the nodes allocations to trips as restricted in (\ref{6-3index}). It indicates that every predecessor of node $s$ in trip $r$ are assigned to the same trip or its previous ones. Constraints (\ref{7-3index}), which link the position variable $s_{ir}$ and the binary variable $x_{ijr}$ together, insure the order of nodes in each trip. Constraints (\ref{8-3index}) enforce the precedence constraints among the nodes inside each route. Constraints (\ref{9-3index}) limit the upper bound of the position variables. This constraint along with (\ref{7-3index}) prevent the model from creating subtours. Finally, constrains (\ref{10-3index}) and (\ref{11-3index}) define the integer and binary variables, respectively.

\subsection{Two-index MIP model} \label{2-index}
In this section, the Two-index MIP model for the proposed problem is developed. To assign the nodes to the trips, dummy depots are used as separators between vertices. It means the nodes positioned between each two depots form a trip. Therefore, the size of the set $N=\{0,1,2,...,n\}$ increases by the maximum number of trips which leads to set $\acute{N}=\{0,1,...,n,n+1,...,n+p\}$ with $n+1+p$ nodes. It should be noted that all the travelling costs for these dummy depots are assumed to be the same as the original depot, i.e., $c_{0j}=c_{n+r j},r \in {1,2,...,p}$. In this formulation, to respect the precedence constraints, continuous variables $Po_i, \forall i \in \acute{N}$ are introduced which indicate the nodes positions in the overall sequence of all the nodes (including dummy depots) in the set $\acute{N}$, no matter in which trip a node is visited. The proposed variables and developed Two-index MIP model are represented as follows:\\

$y_{ij}$: binary variable takes value $1$ if arc $(i,j)$ is linked, $0$ otherwise;

$Po_i$: integer variable indicating the position of node $i \in \acute{N}$;

$Cd_i$: integer variable indicating the cumulative demand of the trip at node $i \in \acute{N}$.\\

Objective:
\begin{equation}\label{obj-2index}
  Z2= \min \mathop{\sum_{i=0}^{n+p} \sum_{j=0}^{n+p}}_{i\neq j} c_{ij} y_{ij}
\end{equation}
subject to
\begin{equation} \label{1-2index}
\begin{aligned}
  \sum_{i=0,j\neq i}^{n+p} y_{ij}=1  &&&&&&  \forall j \in \acute{N} \setminus\{0\},
\end{aligned}
\end{equation}
\begin{equation} \label{2-2index}
\begin{aligned}
  \sum_{j=1,j\neq i}^{n+p} y_{ij}=1  &&&&&&  \forall i \in \acute{N}\setminus\{n+p\},
\end{aligned}
\end{equation}
\begin{equation} \label{3-2index}
\begin{aligned}
  Cd_i + d_j -M (1-y_{ij}) \leq Cd_j &&&&&&  \forall i\neq j \in \acute{N},
\end{aligned}
\end{equation}
\begin{equation} \label{4-2index}
\begin{aligned}
  d_i \leq Cd_i \leq Q &&&&&&  \forall i \in \acute{N},
\end{aligned}
\end{equation}
\begin{equation} \label{5-2index}
\begin{aligned}
  Po_j + M (1-y_{ij}) \geq Po_i +1 &&&&&&  \forall i\neq j \in \acute{N},
\end{aligned}
\end{equation}
\begin{equation} \label{6-2index}
\begin{aligned}
  1 \leq Po_i \leq  |\acute{N}|  &&&&&&  \forall i \in \acute{N},
\end{aligned}
\end{equation}
\begin{equation} \label{7-2index}
\begin{aligned}
  Po_j \geq  Po_i +1 &&&&&&  \forall i \in AND_j,
\end{aligned}
\end{equation}
\begin{equation} \label{8-2index}
\begin{aligned}
  Po_i \geq 0 &&&&&&  \forall i \in \acute{N},
\end{aligned}
\end{equation}
\begin{equation} \label{10-2index}
\begin{aligned}
  Cd_i \geq 0 &&&&&&  \forall i \in \acute{N},
\end{aligned}
\end{equation}
\begin{equation} \label{10-2index}
\begin{aligned}
  y_{ij} \in \{0,1\} &&&&&&  \forall i,j \in \acute{N},
\end{aligned}
\end{equation}\\

As usual objective function (\ref{obj-2index}) minimizes the total travelling costs. Equations (\ref{1-2index}) and (\ref{2-2index}) are the regular routing rules. Constraints (\ref{3-2index}) and (\ref{4-2index}), derived form the well-known Miller-Tucker-Zemlin formulation, ensure the routes connectivity and limit the number of nodes in a trip considering the vehicle capacity. Constraints (\ref{5-2index}), which relate the two types of variables, ensure the order of linked nodes in the overall sequence of the total nodes. The position value is bounded by restriction (\ref{6-2index}).
The precedence constraints are satisfied both inside and among the trips simultaneously by imposing constrains (\ref{7-2index}), since the variables have no index associated to the trips. Finally, constrains (\ref{8-2index})-(\ref{10-2index}) define the integer and binary variables, respectively.

\subsection{Integrated assignment and sequencing MIP model} \label{integrated}
In this section, we represent an Integration model containing distinct assignment and sequencing decisions. The variables and formulation are presented as follows:\\

$x_{ir}$: binary variable takes value $1$ if node $i \in N$ is assigned to trip $r$, $0$ otherwise;

$y_{ijr}$: binary variable $1$ if arc$(i,j)$ is linked in trip $r$, $0$ otherwise;

$po_{ir}$: integer variable indicating the position of node $i \in N$ in trip $r$;

$u_r$: binary variable takes value $1$ if the vehicle travels trip $r$, $0$ otherwise.\\

Since each trip starts from and ends to the depot, the first position of each trip is associated to the depot while the last position is dedicated to the node which directly precedes depot at the last linked arc$(i,0)$ in trip $r$.

Then, the model is stated as:
\begin{equation}\label{obj-integrated}
  Z3= \min \sum_{r=1}^{p} \mathop{\sum_{j=0}^{n} \sum_{i=0}^{n}}_{i\neq j} c_{ij} y_{ijr}
\end{equation}
subject to
\begin{equation} \label{1-integrated}
\begin{aligned}
  \sum_{r=1}^{p} x_{ir}=1  &&&&&&  \forall i \in N \setminus\{0\},
\end{aligned}
\end{equation}
\begin{equation} \label{2-integrated}
\begin{aligned}
  \sum_{i=1}^{n} d_i. x_{ir} \leq Q. u_r  &&&&&&  \forall r \in \{1,...,p\},
\end{aligned}
\end{equation}
\begin{equation} \label{3-integrated}
\begin{aligned}
 x_{jr}= \sum_{i=0,i\neq j}^{n} y_{ijr}   &&&&&&  \forall j \in N, r \in \{1,...,p\},
\end{aligned}
\end{equation}
\begin{equation} \label{4-integrated}
\begin{aligned}
 x_{ir}= \sum_{j=0,j\neq i}^{n} y_{ijr}   &&&&&&  \forall i \in N, r \in \{1,...,p\},
\end{aligned}
\end{equation}
\begin{equation} \label{5-integrated}
\begin{aligned}
u_{r} \leq u_{\acute{r}} &&&&&& \forall r,\acute{r} \in \{1,...,p\}_{r> \acute{r}},
\end{aligned}
\end{equation}
\begin{equation} \label{6-integrated}
\begin{aligned}
 \sum_{k=1}^{r} x_{ek}\geq x_{sr}   &&&&&&  \forall e \in AND_s, r \in \{1,...,p\},
\end{aligned}
\end{equation}
\begin{equation}\label{7-integrated}
\begin{aligned}
  po_{jr}+ M (1-y_{ijr})\geq po_{ir}+1  &&&&&& \forall i\neq j \in N, r \in \{1,...,p\},
\end{aligned}
\end{equation}
\begin{equation}\label{8-integrated}
\begin{aligned}
  1\leq po_{ir} \leq \sum_{j=0,j\neq i}^{n} x_{jr} &&&&&& \forall i \in N, r \in \{1,...,p\},
\end{aligned}
\end{equation}
\begin{equation}\label{9-integrated}
\begin{aligned}
  po_{jr} \geq po_{ir} +1-M (2-x_{ir}+x_{jr}) &&&&&& \forall i \in Pr_j, r \in \{1,...,p\},
\end{aligned}
\end{equation}
\begin{equation}\label{10-integrated}
\begin{aligned}
  po_{jr} \geq 0 &&&&&& \forall j \in N, r \in \{1,...,p\},
\end{aligned}
\end{equation}
\begin{equation}\label{11-integrated}
\begin{aligned}
  y_{ijr}, x_{ir}, u_r \in \{0,1\} &&&&&& \forall i,j \in N, r \in \{1,...,p\},
\end{aligned}
\end{equation}\\
The objective function is represented by equation (\ref{obj-integrated}). Equations (\ref{1-integrated}) guarantee that each node is allocated to exactly one trip. The vehicle capacity is ensured in each trip by equations (\ref{2-integrated}). Equations (\ref{3-integrated}) and (\ref{4-integrated}) relate the two assignment and sequencing variables in a way that each node must be entered and exited exactly once in each trip. Using constraints (\ref{5-integrated}), the sequence of vehicle trips is determined so that trip $r$ cannot be done unless trip $r-1$ has been performed before. Precedence constraints are met in the nodes allocation to the trips as restricted in (\ref{6-integrated}). Constraints (\ref{7-integrated}) relate the two types of the variables to ensure the nodes positions for the linked arc in each trip. The position variables are limited by restrictions (\ref{8-integrated}). Equation (\ref{9-integrated}) impose precedence constraints within each trip using assignment and position variables.
Finally, constrains (\ref{10-integrated}) and (\ref{11-integrated}) define the Integer and binary variables, respectively.

In the next subsection, a set of experiments are conducted to evaluate and compare the performance of the three presented models on a set of generated instances.

\subsection{Comparison of the three MIP formulations} \label{models comparison}
In this section, the performance of the three developed models are evaluated through an extended set of small instances proposed by Martinez-Salazar et al. \cite{Martinez-Salazar et al. 2014}. The instances having 10, 15, 20, 25 and 30 nodes are randomly generated from points with real coordinates using a uniform distribution in the range $[0,100]$. Rounded Euclidean distances are taken as travel cost between each pair of nodes. The maximum number of trips is fixed in 2 for 10-nodes instances and 3 for other sizes. Depending on the size of the instance, $Q$ was set to 120,120, 140, 180, 200. Demand $d_i$ is randomly assigned with a value of 10, 20 or 30 in such a way that the sum of the demands
is between $(p + 1)Q$ and $pQ$ to ensure feasibility. There are 5 instances for each value of $n$.

An upper triangular matrix without the diagonal called Precedence Matrix (PM) is developed to represent the AND-type precedence constraints. Each element of PM denotes whether or not a precedence relation exists between the two corresponding nodes. If node $s$ have AND-type predecessors $\{i,j,k\}$, then $PM_{is}=1$, $PM_{js}=1$, $PM_{ks}=1$. In case that there is no PC between the two nodes, the corresponding element of the matrix is zero. The representation of precedence constraints as an upper triangular matrix ensures feasibility of the relations.

To generate the PCs, we somehow adopt the scheme proposed by Derriesel and Monch \cite{Derriesel and Monch 2011} who addressed the parallel machines with sequence-dependent setup times, precedence constraints, and ready times. The precedence relations are inserted using the factor $\tau =\{0,0.4,0.8\}$ to evaluate PCs' impact by considering three different scales. Given a column, if a chosen random number from $U[0,1]$ is less than $\tau$, we do not consider any predecessors for the node associated with that column. Otherwise, the number of predecessors is chosen according to $U[0, n-1^{th}]$, where $n^{th}$ is the number of that column. Then, the predecessors are randomly selected from the set of already generated nodes $\{1^{th},...,n-1^{th}\}$.

As a result, the total number of instances is $75$ which is the combinations of the number of nodes $n \in \{10,15,20,25,30\}$, the scale of precedence constraints $\tau \in \{0,0.4,0.8\}$, and 5 generated instances for each combination of $n$ and $\tau$. All the models and codes are executed by GAMS 24.1 on corei5 pc with 2.50 GHz CPU and 4 GB of RAM, and the time limit is set to 5400 seconds.

Table \ref{table-3models} presents the results of experiments for comparing the three models.
Column 1 displays the number of nodes of the instances, while column 2 indicates the scale of precedence constraints. Minimum, maximum and average CPU time (in seconds) are shown from columns 3 to 5 for three-index model, from columns 8 to 10 for two-index, and from columns 13 to 15 for integrated one. Columns 6, 11 ,and 16 show the number of instances (out of 5) which can be optimally solved (denoted by opt). A dash in these columns means that the model was unable to find optimal solutions for non of the 5 instances in the time limit. Values in columns 7, 12, 17 are the percentage of average optimality gap (denoted by $\varDelta(\%)$) over non-optimized instances.

The results highlights way better performance of the Two-index model than the other two ones in terms of CPU time, the number of optimally solved instances, and the gap in dealing with instances more than 20 nodes. As it can be observed, the three-index and integrated formulations can not reach optimal solution in the time limit 5400 seconds in dealing with instances with more than 25 and 20 nodes, respectively. However, the two-index formulation is able to optimally solved all the instances, while the three-index and integrated models can find optimal solutions for, respectively, 52 and 35 out of 75 instances.

Not surprisingly, by increasing the instances sizes, the percentage of averaged optimality gap for non-optimized instances grows except little discontinuities. However, the different behaviour is noticed as the scale of PCs increases. It seems, having more PCs dose not necessarily lead to the more complexity of the problem, and consequently, growing optimality gap and the number of non-optimized instances.

Despite of the quality obtained by the models, we also want to point out its efficiency. As expected, the CPU time grows as the size of the problem increases for the three models. Moreover, the performance of the three models in dealing with small-sized instances (up to 15 nodes) are similar, while for larger instances, it is clear that the two-index formulation is capable of solving all instances in far less time than the two other ones.

Taking into account the various values for the scale of PCs, it can be seen that the models in dealing with different sizes do not show the exactly similar trend. It seems increasing or decreasing the number of PCs can not, necessarily, affect the problem complexity. In some cases, more PCs leads to less computational time and in some other cases the opposite is observed which means that not the number of PCs but the structure of them on graph may increase or decrease the problem complexity. This topic has also been addressed in \cite{Prot and Bellenguez-Morineau 2018}.

\begin{table}
\caption{Computational results of the three proposed MIP models.}
\label{table-3models}
\scalebox{0.75}{
\begin{tabular}{rrrrrrrrrrrrrrrrr}
\hline\noalign{\smallskip}
\multicolumn {2}{c}{Instance}& \multicolumn {5}{c}{Three-index model}&\multicolumn {5}{c}{Two-index model} & \multicolumn {5}{c}{Integrated model}\\
  \hline \noalign{\smallskip}
  \multirow {2}{*}{n}& \multirow {2}{*}{$\tau$}& \multicolumn {3}{c}{CPU time (s)} & \multirow {2}{*}{opt}& \multirow {2}{*}{$\varDelta(\%)$}&\multicolumn {3}{c}{CPU time (s)} & \multirow {2}{*}{opt}& \multirow {2}{*}{$\varDelta(\%)$}&\multicolumn {3}{c}{CPU time (s)} & \multirow {2}{*}{opt}& \multirow {2}{*}{$\varDelta(\%)$}\\
 \cline{3-5} \cline{8-10}  \cline{13-15}
  \noalign{\smallskip}
  & & Min & Max & Avg & & & Min & Max & Avg & & & Min & Max & Avg & & \\
\noalign{\smallskip}\hline\noalign{\smallskip}
 10 & 0   &  5   &  8   & 7  & 5 & 0  & 5  & 6  & 5    & 5 & 0 & 5  & 7   & 6   & 5 & 0 \\
 10 & 0.4 &  5   &  7   & 6  & 5 & 0  & 5  & 6  & 5    & 5 & 0 & 6  & 7   & 7   & 5 & 0 \\
 10 & 0.8 &  6   &  8   & 7  & 5 & 0  & 5  & 7  & 6    & 5 & 0 & 6  & 8   & 7   & 5 & 0 \\
 15 & 0   &  15  &  28  & 20 & 5 & 0  & 13 & 19 & 15   & 5 & 0 & 16 & 32  & 23  & 5 & 0 \\
 15 & 0.4 &  12  &  32  & 19 & 5 & 0  & 15 & 30 & 22   & 5 & 0 & 17 & 47  & 30  & 5 & 0 \\
 15 & 0.8 &  18  &  36  & 25 & 5 & 0  & 15 & 27 & 19   & 5 & 0 & 20 & 35  & 27  & 5 & 0 \\
 20 & 0   & 449  & 830  & 638& 5 & 0  & 58 & 191& 113  & 5 & 0 & 577& 1005& 849 & 4 & 8 \\
 20 & 0.4 & 315  & 961  & 746& 4 & 12 & 66 & 219& 181  & 5 & 0 & 490& 490 & 190 & 1 & 33 \\
 20 & 0.8 & 388  & 827  & 673& 5 & 0  & 73 & 327& 202  & 5 & 0 & -  & 5400& 5400& - & 28 \\
 25 & 0   & 3800 & 4729 &4201& 3 & 16 & 360& 844& 608  & 5 & 0 & -  & 5400& 5400& - & 19 \\
 25 & 0.4 & 1931 & 3855 &3128& 4 & 22 & 392& 797& 575  & 5 & 0 & -  & 5400& 5400& - & 18 \\
 25 & 0.8 & 3669 & 3669 &3669& 1 & 27 & 466& 991& 831  & 5 & 0 & -  & 5400& 5400& - & 35 \\
 30 & 0   &  -   & 5400 &5400& - & 19 &1377&2894& 2159 & 5 & 0 & -  & 5400& 5400& - & 23 \\
 30 & 0.4 &  -   & 5400 &5400& - & 35 &1026&2603& 1844 & 5 & 0 & -  & 5400& 5400& - & 29 \\
 30 & 0.8 &  -   & 5400 &5400& - & 30 &1369&3082& 2762 & 5 & 0 & -  & 5400& 5400& - & 36 \\
\noalign{\smallskip}\hline
\end{tabular}}
\end{table}

\section{Logic Based Benders Decomposition algorithm} \label{LBBD}
In this paper, the Logic Based Benders Decomposition algorithm is implemented to address the original problem using a two-phase method called cluster-first and route-second \cite{Fisher and Jaikumar 1981}. The LBBD decomposes the original problem into an assignment master problem in which the nodes are allocated to a number of required trips, and independent sequencing subproblems with the special structure of the traveling salesman problem considering AND-type PCs. At each iteration of the LBBD, the master problem is solved and provides a lower bound for the original problem, since it is a relaxation of the original problem. After solving the MP and specifying the nodes assignments to the trips, the subproblems (associated to the trips) are derived which provide an upper bound. Using the subproblems solution, the optimality cuts are developed to be added to the master problem. Such method converges to the optimum if and only if the master problem solution is improving the lower bound, and the added cuts are excluding the current master problem solution. This procedure goes on until an optimal solution is reached to the original problem or an early stopping criterion is found.

It should be noted that in the master problem and subproblems, the decisions are taken while satisfying the AND-type PCs among the trips and within them, respectively. More precisely, the nodes are allocated to the trips in the assignment master problem in such a way that the AND-type PCs between the couples of nodes are respected among the trips, while the PCs are met within each trip by the subproblem associated to that trip.

\subsection{Master problem} \label{master}
The master problem aims at making decisions on the nodes assignment to a number of required trips satisfying PCs among the trips. So, it involves the binary decision variables $x_{ir}, i \in N,  r \in \{1,...,p\}$ which takes value 1 if node $i$ is assigned to trip $r$, and the binary decision variables $u_r, r \in \{1,...,p\}$ which takes value $1$ if the vehicle travels trip $r$. The proposed master-problem is derived from the Integrated model, defined by constraints (\ref{1-integrated}-\ref{11-integrated}), and represented as follows:\\

\begin{equation}\label{obj-master}
 \min Z_{Master}
\end{equation}
subject to
\begin{equation} \label{1-master}
\begin{aligned}
  \sum_{r=1}^{p} x_{ir}=1  &&&&&&  \forall i \in N \setminus\{0\},
\end{aligned}
\end{equation}
\begin{equation} \label{2-master}
\begin{aligned}
  \sum_{i=1}^{N} d_i. x_{ir}\leq Q.u_r  &&&&&&  \forall r \in \{1,...,p\},
\end{aligned}
\end{equation}
\begin{equation} \label{3-master}
\begin{aligned}
u_{r} \leq u_{\acute{r}} &&&&&& \forall r,\acute{r} \in \{1,...,p\}_{r> \acute{r}},
\end{aligned}
\end{equation}
\begin{equation} \label{4-master}
\begin{aligned}
\sum_{k=1}^r x_{ek} \geq  x_{sr}  &&&&&&  \forall r \in \{1,...,p\}, e \in Pr_s,
\end{aligned}
\end{equation}
\begin{equation} \label{5-master}
\emph{Sub-Problem Relation},
\end{equation}
\begin{equation} \label{6-master}
\begin{aligned}
\emph{Optimality Cuts}   &&&&&&   \forall l \in Iterations,
\end{aligned}
\end{equation}
\begin{equation} \label{7-master}
\begin{aligned}
x_{ir}, u_r \in \{0,1\}   &&&&&&   &&&&&&  \forall r \in \{1,...,p\}, i \in N.
\end{aligned}
\end{equation}\\

As it can be seen, the MP involves constraints on the nodes allocation, the vehicle capacity, order of trips, AND-type PCs among the trips, as well as sub-problem relaxation and benders cuts.

Recently, a research proposed by Cire et al. \cite{Cire et al.2016} has showed that including the master problem with a relation of subproblem can significantly decrease the number of iterations as well as the computational time of LBBD as it provides a tighter lower bound for the original problem. Thus, we propose the following variables and the relaxed subproblem.

$y_{ijr} \in [0,1]$: continuous variable indicating the arc$(i,j)$ in trip $r$;

$\sigma_{r}$: integer variable for the lower bound on the travelling cost of trip $r$.\\

Then the model can stated as:

\begin{equation}\label{obj-relaxed-subproblem}
 \min \sum_{r=1}^{p} \sigma_r
\end{equation}
subject to
\begin{equation} \label{1-relaxed-subproblem}
\begin{aligned}
x_{jr}= \sum_{i=0, i\neq j}^{n} y_{ijr}  &&&&&&  \forall j \in N, r \in T,
\end{aligned}
\end{equation}
\begin{equation} \label{2-relaxed-subproblem}
\begin{aligned}
x_{ir}= \sum_{j=0, j\neq i}^{n} y_{ijr}  &&&&&&  \forall i \in N, r \in \{1,...,p\},
\end{aligned}
\end{equation}
\begin{equation}\label{3-relaxed-subproblem}
\begin{aligned}
\sigma_{r} \geq \mathop{\sum_{i=0}^{n} \sum_{j=0}^{n}}_{i\neq j} c_{ij} y_{ijr} &&&&&&  \forall r \in \{1,...,p\},
\end{aligned}
\end{equation}
\begin{equation}\label{4-relaxed-subproblem}
 Z_{Master} \geq \sum_{r=1}^{p} \sigma_r,
\end{equation}
\begin{equation} \label{5-relaxed-subproblem}
\begin{aligned}
x_{ir}\in \{0,1\}  &&&&&&  \forall i \in N, r \in \{1,...,p\},
\end{aligned}
\end{equation}
\begin{equation} \label{6-relaxed-subproblem}
\begin{aligned}
0 \leq y_{ijr} \leq 1  &&&&&&  \forall i,j \in N, r \in \{1,...,p\}.
\end{aligned}
\end{equation}\\
It should be noted that continuous variables $y_{ijr}$ are enforced to take binary values 0 or 1 due to the constraints (\ref{1-relaxed-subproblem}) and (\ref{2-relaxed-subproblem}) which are related to the binary assignment variables $x_{ir}$. Using constrains \eqref{3-relaxed-subproblem}, \eqref{4-relaxed-subproblem}, the lower bound for the cost of each trip and the value of master problem are computed, respectively.
Also, constrains (\ref{5-relaxed-subproblem}), (\ref{6-relaxed-subproblem}) define the binary and continuous variables, respectively.

It should be noted that the MP solutions may not be globally feasible in terms of the sequence of nodes in the trips, since no constraints are imposed to eliminate the possible sub-tours, and the PCs among the nodes within each trip.

\subsection{Sub-problem} \label{sub}
When the number of required trips and the nodes assignment to them are determined by the sub-optimal solution of the MP, each sub-problem associated to each trip can be seen as a traveling salesman problem with AND-type PCs. At each iteration, the sum of subproblems values (associated to the trips) provides an upper bound for the original problem.
In order to formulate the subproblem, $Alt_r^l$ and $nb_r^l$ are defined which, respectively, indicate the cluster (set) of nodes assigned to trip $r$ and its total number computed by the solution of master problem in iteration $l$. The two parameters are updated as the algorithm proceeds. For each iteration $l$ and each trip $r$ (with $u_r=1$), the sub-problem model and the variables are represented as follows:\\

$y_{ij}$: binary variable takes value $1$ if arc $(i,j)$ is linked, $0$ otherwise;

$po_i$: continuous variable indicating the position of node $i \in N$.\\

\begin{equation}\label{obj-subproblem}
  Z_{SP(r)}^l= \min \sum_{i=0}^{n} \sum_{j=0}^{n} c_{ij} y_{ij}
\end{equation}
subject to
\begin{equation} \label{1-subproblem}
\begin{aligned}
\sum_{i=0}^{n} y_{ij}=1  &&&&&&  \forall j \in Alt_r^l,
\end{aligned}
\end{equation}
\begin{equation} \label{2-subproblem}
\begin{aligned}
\sum_{j=0}^{n} y_{ij}=1  &&&&&&  \forall i \in Alt_r^l,
\end{aligned}
\end{equation}
\begin{equation} \label{3-subproblem}
\begin{aligned}
po_j - po_i +M (1-y_{ij}) \geq 1 &&&&&&  \forall i,j \in Alt_r^l,
\end{aligned}
\end{equation}
\begin{equation} \label{4-subproblem}
\begin{aligned}
po_j \geq po_i +1  &&&&&&  i,j \in Alt_r^l, i \in AND_j,
\end{aligned}
\end{equation}
\begin{equation} \label{5-subproblem}
\begin{aligned}
1 \leq po_i \leq nb_r^l &&&&&&  i \in Alt_r^l,
\end{aligned}
\end{equation}
\begin{equation}\label{6-subproblem}
\begin{aligned}
  po_{j} \geq 0 &&&&&& \forall j \in N,
\end{aligned}
\end{equation}
\begin{equation}\label{7-subproblem}
\begin{aligned}
  y_{ij} \in \{0,1\}  &&&&&& \forall i,j \in N.
\end{aligned}
\end{equation}\\

As usual objective function (\ref{obj-subproblem}) minimizes the total travelling costs for trip $r$. Equations (\ref{1-subproblem}) and (\ref{2-subproblem}) are the regular routing rules. Constraints (\ref{3-subproblem}), which relate the two types of variables, ensure the order of linked nodes in each trip. The precedence constraints are satisfied by imposing constrains (\ref{4-subproblem}). The position value is bounded by restriction (\ref{5-subproblem}). Constraints (\ref{3-subproblem}- \ref{5-subproblem}) enforce the continuous variable $Po_i, \forall i \in N$ takes the integer values between $0$ and $nb_r^l$. We empirically observed that considering $Po_i$ as continuous variable instead of integer one can reduce the computational time of the subproblem. Finally, constrains (\ref{6-subproblem}), (\ref{7-subproblem}) define the continuous and binary variables, respectively.

\subsection{Optimality Cuts} \label{benders cut}
The cuts play a vital role in the convergence of the LBBD algorithm. Unfortunately, there is no systematic procedure to derive such cuts as in classical Benders Decomposition. Therefore, tailored cuts must be developed according to the studied problem. There is no need to add any feasibility cut during the search since the subproblems are feasible for any master problem.

Let $Z_{SP(r)}^{l}$ and $\sigma_{r}$ denote, respectively, the optimal cost of the sequencing subproblem associated with trip $r$ and iteration $l$, and the cost of trip $r$. A simple variant of optimality cut is

\begin{equation} \label{cut0}
\sigma_{r} \geq Z_{SP(r)}^l -\sum_{i|x_{ir}^{l}=1} (1-x_{ir}),
\end{equation}
i.e., the MP gives the same solutions as before, $(1-x_{ir})$ for all $i$ becomes zero. It means a better result cannot be achieved as the objective function $\sigma_{r}$ being limited by $Z_{SP(r)}^l$. So, the master problem is enforced to change the nodes allocations till a new set, which has not been repeated before, is found.

Several researchers have used \eqref{cut0} and its equivalent version in different optimization problems, like \cite{Roshanaei et al.2017}, \cite{Fazel-Zarandi and Beck2012} and \cite{Barzanji et al.2020}. Unfortunately, the above-mentioned optimality cut \eqref{cut0} does not perform well in practice since the cut only affects a small number of solutions in the MP.
Recently, Faganello Fachini and Armentano \cite{Faganello Fachini and Armentano2020} have proposed an extended form of cut \eqref{cut0} in dealing with the heterogeneous fixed fleet vehicle routing problem with time windows. Their developed cut can be written as

\begin{equation} \label{cut1}
\sigma_{r} \geq Z_{SP(r)}^l -\sum_{i|x_{ir}^{l}=1} (1-x_{ir}) (2\max_{j\neq i|x_{jr}^{l}=1} \{c_{ij}\}).
\end{equation}

The cut indicates that if the current solution of the master problem does not include a node of a previously obtained trip, it reduces by twice the maximum travel cost over all pairs of nodes in this previously obtained route.

In this paper, we try to strengthen cut \eqref{cut1} to affect more solutions instead of excluding few ones. We propose the following optimality cuts for the master problem

\begin{eqnarray} \label{cut2}
\sigma_{r} \geq Z_{SP(r)}^l -\sum_{i|x_{ir}^{l}=1} (1-x_{ir}) (\mathop{max_{1} \{c_{ij}\}}_{j\neq i|x_{jr}^{l}=1}+\mathop{max_{2} \{c_{ij}\}}_{j\neq i|x_{jr}^{l}=1})+\\ \nonumber
 \sum_{i|x_{ir}^{l}=0} x_{ir} (\mathop{min_{1} \{c_{ij}\}}_{j\neq i|x_{jr}=1}+\mathop{min_{2} \{c_{ij}\}}_{j\neq i|x_{jr}=1}),
\end{eqnarray}
where $max_{1} \{c_{ij}\}$ and $max_{2} \{c_{ij}\}$ are the two highest, and $min_{1} \{c_{ij}\}$ and $min_{2} \{c_{ij}\}$ are the two lowest values of the traveling cost linked to node $i$. In comparison with cut \eqref{cut1}, a stronger lower bound for $\sigma_{r}$ can be obtained in \eqref{cut2}. This is due to considering not only the nodes are excluded from trip $r$ (by subtracting the two associated highest cost) but also the ones are newly included to that trip (by adding the two associated lowest cost). In addition, considering the two highest and the two lowest cost might drive tighter lower bound \eqref{cut2}, since

\begin{equation}\label{relation-cut-max}
\mathop{max_{1} \{c_{ij}\}}_{j\neq i|x_{jr}^{l}=1}+\mathop{max_{2} \{c_{ij}\}}_{j\neq i|x_{jr}^{l}=1} \leq 2\max_{j\neq i|x_{jr}^{l}=1} \{c_{ij}\},
\end{equation}

and

\begin{equation}\label{relation-cut-min}
\mathop{min_{1} \{c_{ij}\}}_{j\neq i|x_{jr}=1}+\mathop{min_{2} \{c_{ij}\}}_{j\neq i|x_{jr}=1} \geq 2\min_{j\neq i|x_{jr}=1} \{c_{ij}\}.
\end{equation}

To proof the cut validity, we somehow follow the same procedure proposed by \cite{Faganello-Fachini-and-Armentano2020}.

The Benders optimality cut communicates to the MP for two purposes: (1) eliminating the current sub-optimal MP solution (Theorem 1), (2) not removing the globally feasible solutions (Theorem 2).

\newtheorem{thm}{Theorem}
\begin{thm}
The optimality cut \eqref{cut2} remove the current sub-optimal solution.
\end{thm}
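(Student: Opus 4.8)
The plan is to show that when the master problem reproduces the incumbent assignment, that is, when $x_{ir}=x_{ir}^{l}$ for every node $i$ and for the trip $r$ under consideration, the right-hand side of cut \eqref{cut2} collapses to exactly $Z_{SP(r)}^{l}$. This forces $\sigma_{r}$ up to the true sequencing cost of that trip, so that the incumbent can no longer be returned with the optimistic objective value the master (being a relaxation) had previously assigned to it, and it is therefore excluded.

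First I would examine the two correction terms separately after substituting $x_{ir}=x_{ir}^{l}$. In the first summation the index set is $\{i : x_{ir}^{l}=1\}$, so each factor $(1-x_{ir})=(1-1)=0$; regardless of the size of the multiplying coefficient (the sum of the two largest costs incident to $i$), the entire subtracted term vanishes identically. In the second summation the index set is $\{i : x_{ir}^{l}=0\}$, so each factor $x_{ir}=0$; again the added term vanishes irrespective of the multiplying coefficient (the sum of the two smallest incident costs). The key observation is that the vanishing is driven purely by the switching factors $(1-x_{ir})$ and $x_{ir}$, independently of the possibly large cost coefficients they multiply.

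With both correction terms equal to zero, the cut reduces to $\sigma_{r}\geq Z_{SP(r)}^{l}$ at the incumbent. I would then argue the exclusion effect: because $Z_{SP(r)}^{l}$ is the genuine optimal routing cost of trip $r$ for the assignment $x^{l}$, summing the resulting bounds over all active trips yields $\sum_{r}\sigma_{r}\geq\sum_{r}Z_{SP(r)}^{l}$, which is precisely the upper bound produced at iteration $l$. Hence the master can no longer attach to $x^{l}$ an objective below its true cost, and the sub-optimal incumbent is driven out of the set of improving solutions.

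The argument is short and carries no real analytical difficulty. The only point requiring care, and the one I would stress, is that the reduction gives \emph{equality} on the right-hand side rather than a loose inequality: the cut is tight at $x^{l}$, not merely valid there, which is exactly what makes the exclusion effective instead of vacuous. I would therefore present the verification of the two vanishing correction terms as the substantive content of the proof and let the bound $\sigma_{r}\geq Z_{SP(r)}^{l}$ and its aggregation to the iteration-$l$ upper bound follow immediately.
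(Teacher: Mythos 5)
Your proposal is correct and follows essentially the same route as the paper's own proof: both arguments observe that reproducing the incumbent assignment makes the switching factors $(1-x_{ir})$ and $x_{ir}$ annihilate the two correction terms, reduce the cut to $\sigma_{r}\geq Z_{SP(r)}^{l}$, and then aggregate over trips to conclude $\sum_{r}\sigma_{r}\geq \sum_{r}Z_{SP(r)}^{l}>\sum_{r}\sigma_{r}^{l}$, so the incumbent cannot recur with its previous optimistic value. The only cosmetic difference is that the paper wraps this in a three-case discussion of how $\sum_{r}\sigma_{r}^{l}$ compares with $\sum_{r}Z_{SP(r)}^{l}$ before isolating the sub-optimal case you treat directly.
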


\begin{proof}
Let $\sigma_{r}^{l}, r \in \{1,...,p\}$ be the travelling cost of trip $r$ by the solution of master problem in iteration $l$. Solving the subproblems associated with all trips, three possible situations can be happened:
\begin{itemize}
\item $\sum_{r=1}^{p} \sigma_{r}^{l} > \sum_{r=1}^{p} Z_{SP(r)}^{l}$, which means that the LBBD terminates and the value of upper bound is optimal for the original problem;
\item $\sum_{r=1}^{p} \sigma_{r}^{l} = \sum_{r=1}^{p} Z_{SP(r)}^{l}$, which indicates that the LBBD terminates and the solution of master problem is optimal for the original problem;
\item $\sum_{r=1}^{p} \sigma_{r}^{l} < \sum_{r=1}^{p} Z_{SP(r)}^{l}$, i.e., the current solution of master problem is not globally feasible (sub-optimal solution). In this case , the optimality cuts derived from subproblems are added to the master problem. Lets suppose that the same solution (nodes allocation to trips) of master problem is obtained in the subsequent iteration, then for each trip $r$, $\sum_{i|x_{ir}^{l}=1} (1-x_{ir})=0$ and $\sum_{i|x_{ir}^{l}=0} x_{ir} =0$. From \eqref{cut2}, the optimality cut for each trip $r$ is
\begin{equation} \label{theorem1-cut1}
\sigma_{r} \geq Z_{SP(r)}^{l}.
\end{equation}
Then, we obtain
\begin{equation} \label{theorem1-cut2}
\sum_{r=1}^{p} \sigma_{r} \geq \sum_{r=1}^{p} Z_{SP(r)}^{l} > \sum_{r=1}^{p} \sigma_{r}^{l},
\end{equation}
which means that the current solution obtained in iteration $l$ is removed from the master problem space.
\end{itemize}
\end{proof}

\begin{thm}
The optimality cut \eqref{cut2} does not exclude any globally feasible solutions in future
iterations.
\end{thm}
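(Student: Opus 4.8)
The goal is to show that the right-hand side of \eqref{cut2} is a valid lower bound on the true sequencing cost of \emph{every} node-to-trip assignment the master may produce in a later iteration, so that a globally feasible solution---one in which $\sigma_{r}$ equals the genuine optimal cost $Z_{SP(r)}$ of its trips---can never violate the cut. I would argue trip by trip, adapting the transformation argument that Faganello-Fachini and Armentano use for the weaker cut \eqref{cut1}. Fix a trip $r$, let $S^{l}=\{i:x_{ir}^{l}=1\}$ be the set routed in iteration $l$ at optimal cost $Z_{SP(r)}^{l}$, and let $S=\{i:x_{ir}=1\}$ be the trip of some later globally feasible solution with optimal cost $\sigma_{r}=Z_{SP(r)}(S)$. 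Put $R=S^{l}\setminus S$ (deleted nodes) and $A=S\setminus S^{l}$ (inserted nodes); these index precisely the active terms of the two sums in \eqref{cut2}.

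I would dispatch the subtracted $\max$ term first, since it can inherit the validity already established for \eqref{cut1}. Rebuilding $S^{l}$ from the common set $S^{l}\cap S$ by re-inserting the nodes of $R$ one at a time, each node can be placed in a precedence-feasible slot---such a slot exists because, in a precedence-respecting order, every predecessor of the inserted node precedes every successor by transitivity of the AND-relations---and, assuming the symmetric costs obey the triangle inequality as they do on the Euclidean instances, the insertion raises the cost by $c_{ui}+c_{iv}-c_{uv}\le c_{ui}+c_{iv}$, which is at most $\max_{1}\{c_{ij}\}+\max_{2}\{c_{ij}\}$ over $j\in S^{l}$. Hence $Z_{SP(r)}^{l}\le Z_{SP(r)}(S^{l}\cap S)+\sum_{i\in R}(\max_{1}\{c_{ij}\}+\max_{2}\{c_{ij}\})$, and since enlarging a metric instance never decreases the optimal tour, $Z_{SP(r)}(S^{l}\cap S)\le\sigma_{r}$. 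Combining gives $\sigma_{r}\ge Z_{SP(r)}^{l}-\sum_{i\in R}(\max_{1}+\max_{2})$, which by \eqref{relation-cut-max} dominates the corresponding part of \eqref{cut1}.

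It remains to account for the additional $\min$ term, i.e.\ to sharpen the bound $Z_{SP(r)}(S^{l}\cap S)\le\sigma_{r}$ into $\sigma_{r}\ge Z_{SP(r)}(S^{l}\cap S)+\sum_{i\in A}(\min_{1}\{c_{ij}\}+\min_{2}\{c_{ij}\})$; the surplus $\sum_{i\in A}(\min_{1}+\min_{2})$ is exactly the strengthening of \eqref{cut2} over \eqref{cut1}. The natural attempt is to note that in any tour on $\{0\}\cup S$ each inserted node $i\in A$ carries two incident arcs whose costs are individually at least its two cheapest arcs to other members of $S$, so these arcs contribute at least $\min_{1}\{c_{ij}\}+\min_{2}\{c_{ij}\}$ to $\sigma_{r}$, and then to delete the nodes of $A$ from the optimal tour on $\{0\}\cup S$ and compare with the optimum on $S^{l}\cap S$.

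This last step is the one I expect to be the main obstacle, and the part of the authors' argument I would examine most carefully. Deleting an inserted node $i$ reconnects its two neighbours by an arc $c_{uv}$, and the triangle inequality only guarantees $c_{uv}\le c_{ui}+c_{iv}$, so the cost actually released, $c_{ui}+c_{iv}-c_{uv}$, can be driven to zero rather than down to $\min_{1}+\min_{2}$; a per-node charging argument therefore recovers only $\sigma_{r}\ge Z_{SP(r)}(S^{l}\cap S)$ and not the promised surplus. To rescue the $\min$ term I would either seek a simultaneous charging scheme over all of $A$ that pays for the reconnection arcs out of the slack on the $\max$ side, or reinterpret $\sigma_{r}$ through the degree-constrained relaxation \eqref{1-relaxed-subproblem}--\eqref{3-relaxed-subproblem}, where every inserted node is compelled to buy one entering and one leaving arc---each at least $\min_{1}$ and $\min_{2}$---with no reconnection rebate, so that the surplus reappears and \eqref{relation-cut-min} certifies the estimate is conservative. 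Reconciling these reconnection arcs with the per-node $\min$ charges, while keeping the inserted nodes in precedence-feasible positions, is the crux on which the validity of \eqref{cut2} ultimately rests.
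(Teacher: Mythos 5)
Your proposal reproduces the architecture of the paper's own proof: argue by contradiction trip by trip, split the old and new clusters into deleted, common and inserted nodes (your $R$, $S^{l}\cap S$, $A$ are the paper's $\phi_{3}$, $\phi_{2}$, $\phi_{1}$), bound the cost of re-inserting each deleted node by $\max_{1}\{c_{ij}\}+\max_{2}\{c_{ij}\}$ via the triangle inequality, and use monotonicity of the metric TSP under taking subsets; your handling of the subtracted $\max$ term matches the paper's steps \eqref{theorem2-cut77}--\eqref{theorem2-cut88}. However, your proof is not complete. You stop exactly at the added $\min$ term and neither of the two rescues you sketch is carried out, so what you have actually established is only (a slight strengthening of) the validity of \eqref{cut1}; the surplus $\sum_{i\in A}(\min_{1}\{c_{ij}\}+\min_{2}\{c_{ij}\})$, which is the entire point of \eqref{cut2}, remains unproved, and the theorem is not established by your argument.

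That said, you have located precisely the step on which the paper's own argument leans hardest. The paper closes the gap at \eqref{theorem2-cut6} by asserting that deleting each node $i\in\phi_{1}$ from the optimal tour on $H^{*}$ releases at least $\min_{1}\{c_{ij}\}+\min_{2}\{c_{ij}\}$, ``since travel costs satisfy the triangular inequality.'' As you observe, the triangle inequality only gives $c_{uv}\le c_{ui}+c_{iv}$ for the reconnection arc, so the amount actually released is $c_{ui}+c_{iv}-c_{uv}$, which can be arbitrarily close to zero (three nearly collinear points) even when $\min_{1}+\min_{2}$ is bounded away from zero; the cited justification therefore does not deliver \eqref{theorem2-cut6}, and \eqref{relation-cut-min} points in the unhelpful direction for repairing it. So the honest verdict is that your write-up has a genuine gap at the $\min$ term, and the paper's proof fills the same gap only by an assertion its stated justification does not support; a complete validity argument for \eqref{cut2} would require either a simultaneous charging scheme of the kind you allude to, or a weakening of the $\min$ term (for instance replacing it by the actual per-node deletion saving, or dropping it, which recovers \eqref{cut1}).
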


\begin{proof}
We assume that there exists a globally feasible trip not satisfying the optimality cut and then showing a contradiction. Let $Z^{*}$ denote the optimal cost of trip $r$ associated with the cluster of customers $H^{*}$, which was assigned to that trip through a globally feasible solution for the original problem. Let $\bar{Z}$ be the optimal cost of the same trip associated with cluster $\bar{H}$, which was found in iteration $l$. We define three sets of nodes:

\begin{itemize}
\item $\phi_{1}=\{H^{*} \setminus \bar{H}\}$: nodes in $H^{*}$ not in $\bar{H}$;
\item $\phi_{2}=\{H^{*} \cap \bar{H}\}$: nodes in both $H^{*}$ and $\bar{H}$;
\item $\phi_{3}=\{\bar{H} \setminus H^{*}\}$: nodes in $\bar{H}$ not in $H^{*}$.
\end{itemize}

An optimality cut \eqref{cut2} is generated for trip $r$ associated with cluster $\bar{H}$ in iteration $l$. Let's assume that trip $r$ obtained from the cluster $H^{*}$ violate the optimality cut. Then, we have

\begin{equation} \label{theorem2-cut1}
Z^{*} < \bar{Z} -\sum_{i \in \bar{H}} (1-x_{ir}) (\mathop{max_{1} \{c_{ij}\}}_{j\neq i \in \bar{H}}+\mathop{max_{2} \{c_{ij}\}}_{j\neq i \in \bar{H}})+ \sum_{i \not \in \bar{H}} x_{ir} (\mathop{min_{1} \{c_{ij}\}}_{j\neq i \in H^{*}}+\mathop{min_{2} \{c_{ij}\}}_{j\neq i \in H^{*}}).
\end{equation}

For the nodes in set $\phi_{2}$, $\sum_{i \in \phi_{2}} (1-x_{ir})=0$ in the globally feasible Route$(H^{*})$. Since Route $(H^{*})$ dose not include the nodes in set $\phi_{3}$, therefore, $x_{ir}=0$ for all $i \in \phi_{3}$, and $\sum_{i \in \phi_{3}} (1-x_{ir})=|\phi_{3}|$. Moreover, set $\phi_{1}$ contains the nodes not included in $\bar{H}$. So, \eqref{theorem2-cut1} can be presented as

\begin{equation} \label{theorem2-cut2}
Z^{*} < \bar{Z} -\sum_{i \in \phi_{3}} (\mathop{max_{1} \{c_{ij}\}}_{j\neq i \in \bar{H}}+\mathop{max_{2} \{c_{ij}\}}_{j\neq i \in \bar{H}})+ \sum_{i \in \phi_{1}}(\mathop{min_{1} \{c_{ij}\}}_{j\neq i \in H^{*}}+\mathop{min_{2} \{c_{ij}\}}_{j\neq i \in H^{*}}).
\end{equation}

Note that if $\phi_{2}= \emptyset$, then $\phi_{3}= \bar{H}$ and $\phi_{1}=H^{*}$. We can define an upper bound for $\bar{Z}$ as

\begin{equation} \label{theorem2-cut3}
\bar{Z} \leq \sum_{i \in \phi_{3}}
(\mathop{max_{1} \{c_{ij}\}}_{j\neq i \in \phi_{3}}+\mathop{max_{2} \{c_{ij}\}}_{j\neq i \in \phi_{3}}),
\end{equation}

including the two highest costs of node $i \in \phi_{3}$ linked to other nodes in that set, and a lower bound for $Z^{*}$ as

\begin{equation} \label{theorem2-cut4}
Z^{*} \geq \sum_{i \in \phi_{1}} (\mathop{min_{1} \{c_{ij}\}}_{j\neq i \in \phi_{1}}+\mathop{min_{2} \{c_{ij}\}}_{j\neq i \in \phi_{1}}),
\end{equation}

involving the two lowest cost of node $i \in \phi_{1}$ linked to other nodes in that set. So, equation \eqref{theorem2-cut2} can be written as

\begin{equation} \label{theorem2-cut5}
Z^{*} < \bar{Z} - \sum_{i \in \phi_{3}}
(\mathop{max_{1} \{c_{ij}\}}_{j\neq i \in \phi_{3}}+\mathop{max_{2} \{c_{ij}\}}_{j\neq i \in \phi_{3}}) + \sum_{i \in \phi_{1}} (\mathop{min_{1} \{c_{ij}\}}_{j\neq i \in \phi_{1}}+\mathop{min_{2} \{c_{ij}\}}_{j\neq i \in \phi_{1}}).
\end{equation}

From \eqref{theorem2-cut3}, the value of $\bar{Z} - \sum_{i \in \phi_{3}}
(\mathop{max_{1} \{c_{ij}\}}_{j\neq i \in \phi_{3}}+\mathop{max_{2} \{c_{ij}\}}_{j\neq i \in \phi_{3}})$ is equal to or less than zero. Therefore, according to \eqref{theorem2-cut5}, $Z^{*}$ is less than a negative value plus $\sum_{i \in \phi_{1}} (\mathop{min_{1} \{c_{ij}\}}_{j\neq i \in \phi_{1}}+\mathop{min_{2} \{c_{ij}\}}_{j\neq i \in \phi_{1}})$ which leads to a contradiction with the lower bound of $Z^{*}$ defined in \eqref{theorem2-cut4}.\\

Otherwise $(\phi_{2} \neq \emptyset)$, the contradiction \eqref{theorem2-cut2} is more involved as we subsequently represent.

Given the globally feasible trip derived from a cluster of nodes, it is possible to reach a reduced or an extended route by removing or adding the nodes to that cluster. Let's consider the trip associated with cluster $H^{*}$ with cost $Z^{*}$. By removing the nodes in $\phi_{1}$ from $H^{*}$, Trip$(\phi_{2})$ and the respective cost $Z_{\phi_{2}}$ is obtained. Removing the nodes $i \in \phi_{1}$ from Trip$(H^{*})$ results in a cost smaller than or equal to $Z^{*}$, since travel costs satisfy the triangular inequality, therefore,

\begin{equation} \label{theorem2-cut6}
Z_{\phi_{2}}=Z_{H^{*} \setminus \phi_{1}} \leq Z^{*}-\sum_{i \in \phi_{1}}(\mathop{min_{1} \{c_{ij}\}}_{j\neq i \in H^{*}}+\mathop{min_{2} \{c_{ij}\}}_{j\neq i \in H^{*}}).
\end{equation}

The above equation indicates that removing nodes in $\phi_{1}$, consequently, subtracting the two lowest cost of node $i$ linked to node $j \in H^{*}$ from the value of $Z^{*}$ defines an upper bound for the cost $Z_{\phi_{2}}$.

Accordingly, \eqref{theorem2-cut6}, using the contradiction \eqref{theorem2-cut2}, becomes:

\begin{eqnarray} \label{theorem2-cut7}
Z_{\phi_{2}} &\leq& Z^{*}-\sum_{i \in \phi_{1}}(\mathop{min_{1} \{c_{ij}\}}_{j\neq i \in H^{*}}+\mathop{min_{2} \{c_{ij}\}}_{j\neq i \in H^{*}}) \leq Z^{*} \nonumber\\
&\leq& \bar{Z} -\sum_{i \in \phi_{3}} (\mathop{max_{1} \{c_{ij}\}}_{j\neq i \in \bar{H}}+\mathop{max_{2} \{c_{ij}\}}_{j\neq i \in \bar{H}}) \nonumber\\
&+& \sum_{i \in \phi_{1}}(\mathop{min_{1} \{c_{ij}\}}_{j\neq i \in H^{*}}+\mathop{min_{2} \{c_{ij}\}}_{j\neq i \in H^{*}}).
\end{eqnarray}

Now let us extend Trip$(\phi_{2})$ to Trip$(\phi_{2}\cup \phi_{3})$ with cost $Z_{\phi_{2}\cup \phi_{3}}$ by inserting the nodes belongs to $\phi_{3}$ into Trip$(\phi_{2})$. Consider Trip$(\phi_{2})$ represented by the sequence of nodes $(0,...,g,o,...,z,0)$. Let's assume that the node $i \in \phi_{3}$ is inserted into Trip$(\phi_{2})$ such that $\mathop{max_{1} \{c_{ij}\}}_{j\neq i \in \phi_{2}}+\mathop{max_{2} \{c_{ij}\}}_{j\neq i \in \phi_{2}}$ is the sum of the two highest cost of $c_{i0},...,c_{ig},c_{io},...,c_{iz}$. The insertion cost of node $i \in \phi_{3}$ between nodes $g$ and $o$ is given by $c_{gi}+c_{io}-c_{go}$.
Moreover, observe that $c_{gi}-c_{go}\leq c_{oi}$, since the triangular inequality holds.
Therefore, the insertion cost of node $i \in \phi_{3}$ between nodes $g$ and $o$ yields

\begin{eqnarray} \label{theorem2-cut77}
c_{gi}+c_{io}-c_{go}\leq c_{gi}+c_{io} \leq
\mathop{max_{1} \{c_{ij}\}}_{j\neq i \in \phi_{2}}+\mathop{max_{2} \{c_{ij}\}}_{j\neq i \in \phi_{2}}.
\end{eqnarray}

Proceeding this way for all nodes in $\phi_{3}$, we obtain Trip$(\phi_{2}\cup \phi_{3})$ such that

\begin{equation} \label{theorem2-cut8}
Z_{(\phi_{2}\cup \phi_{3})} \leq Z^{*}
+ \sum_{i \in \phi_{3}} (\mathop{max_{1} \{c_{ij}\}}_{j\neq i \in \phi_{2}}+\mathop{max_{2} \{c_{ij}\}}_{j\neq i \in \phi_{2}}) -\sum_{i \in \phi_{1}}(\mathop{min_{1} \{c_{ij}\}}_{j\neq i \in H^{*}}+\mathop{min_{2} \{c_{ij}\}}_{j\neq i \in H^{*}}).
\end{equation}

Since $\phi_{2} \subset (\phi_{2}\cup \phi_{3})=\bar{H}$, for all $i \in \phi_{3}$, one has

\begin{eqnarray} \label{theorem2-cut88}
\mathop{max_{1} \{c_{ij}\}}_{j\neq i \in \phi_{2}}+\mathop{max_{2} \{c_{ij}\}}_{j\neq i \in \phi_{2}}) \leq \mathop{max_{1} \{c_{ij}\}}_{j\neq i \in \bar{H}}+\mathop{max_{2} \{c_{ij}\}}_{j\neq i \in \bar{H}}).
\end{eqnarray}

Hence, \eqref{theorem2-cut8} can be written as

\begin{equation} \label{theorem2-cut9}
Z_{(\phi_{2}\cup \phi_{3})} \leq Z^{*}-\sum_{i \in \phi_{1}}(\mathop{min_{1} \{c_{ij}\}}_{j\neq i \in H^{*}}+\mathop{min_{2} \{c_{ij}\}}_{j\neq i \in H^{*}})
+ \sum_{i \in \phi_{3}} (\mathop{max_{1} \{c_{ij}\}}_{j\neq i \in \bar{H}}+\mathop{max_{2} \{c_{ij}\}}_{j\neq i \in \bar{H}}).
\end{equation}

Regarding \eqref{theorem2-cut9}, one gets

\begin{equation} \label{theorem2-cut10}
Z^{*} \geq Z_{(\phi_{2}\cup \phi_{3})} +\sum_{i \in \phi_{1}}(\mathop{min_{1} \{c_{ij}\}}_{j\neq i \in H^{*}}+\mathop{min_{2} \{c_{ij}\}}_{j\neq i \in H^{*}})
-\sum_{i \in \phi_{3}} (\mathop{max_{1} \{c_{ij}\}}_{j\neq i \in \bar{H}}+\mathop{max_{2} \{c_{ij}\}}_{j\neq i \in \bar{H}}).
\end{equation}

As Route$(\bar{H})$ is optimal for the cluster $\bar{H}=\phi_{2}\cup \phi_{3}$ with cost $\bar{Z}$, then

\begin{equation} \label{theorem2-cut11}
Z_{(\phi_{2}\cup \phi_{3})} \geq \bar{Z}.
\end{equation}

From \eqref{theorem2-cut10} and \eqref{theorem2-cut11} we obtain

\begin{equation} \label{theorem2-cut12}
Z^{*} \geq \bar{Z} +\sum_{i \in \phi_{1}}(\mathop{min_{1} \{c_{ij}\}}_{j\neq i \in H^{*}}+\mathop{min_{2} \{c_{ij}\}}_{j\neq i \in H^{*}})
-\sum_{i \in \phi_{3}} (\mathop{max_{1} \{c_{ij}\}}_{j\neq i \in \bar{H}}+\mathop{max_{2} \{c_{ij}\}}_{j\neq i \in \bar{H}}).
\end{equation}

Equation \eqref{theorem2-cut12} contradicts \eqref{theorem2-cut7} and, consequently, the optimality cut \eqref{cut2} does not remove globally feasible solutions.
\end{proof}

\section{Computational experiments} \label{Computational experiments}
This section evaluates the performance of the proposed LBBD algorithm against the two-index MIP model, the best formulation among the three proposed ones as shown in section \ref{models comparison}. We test three different versions of the LBBD algorithm. The first one denoted by LBBD1 incorporates the optimality cut \eqref{cut1} proposed recently by Faganello Fachini and Armentano \cite{Faganello Fachini and Armentano2020}. In LBBD2, our developed cut, represented as \eqref{cut2}, is applied. The two algorithms LBBD1 and LBBD2 are the exact approaches in which both the MP and the SP are optimally solved (optimality gap of zero).
LBBD3 is a heuristic version of LBBD2 in which the SP is optimally solved, yet the MP is
solved for at most 5 seconds or 5\% optimality gap (whichever comes first). This change to the master problem results in it no longer being a true lower bound. Such a stopping criterion was first proposed in \cite{Tran2012} under which they speeded up the MP by allowing the solver to stop once a solution is found within some predetermined gap from the best lower bound obtained. However, they indicated that the quality of the solution found is within the chosen optimality gap. Moreover, Barzanji et al. \cite{Barzanji et al.2020} has recently improved their proposed LBBD by defining a gap of 5\% for solving the SP in dealing with the integrated process planning and scheduling problem. As suggested in \cite{Tran2012} and \cite{Barzanji et al.2020} and, also according to preliminary experiments, a gap of 5\% is chosen, in this research, which results in a good trade off between computational time and quality.

The model and the algorithms are coded in GAMS 24.1 on corei5 pc with 2.50 GHz CPU and 4 GB of RAM, and the time limit is set to maximum 5400 seconds.
In section \ref{Instance generation}, we describe the instances generated as a testbed for our assessment. Our computational experiment results are described and commented on in Section
\ref{comparison}.

\subsection{Instance generation} \label{Instance generation}
Since we could not find a the standard benchmark for the proposed problem, our test-bed is generated by extending the same method described in \cite{Augerat et al.1995} to deal with the CVRP. To build our instances, we assume that there is a single vehicle with the possibility of multiple trips, then, we take the proposed number of vehicles as the maximum number of trips in our problem. There are 27 instances having the nodes ranging from 32 to 80 randomly generated from points with real coordinates using a uniform distribution in the range $[0,100]$. Rounded Euclidean distances are taken as travel cost between each pair of nodes. The vehicle capacity $Q$ is set to 100 for all instances. Demand $d_i, \forall i \in N \setminus\{0\}$ is picked from an uniform distribution $U(1,30)$, however $n/10$ of those demands are multiplied by 3.

For each instance, the Precedence Matrix (PM) is generated using three different values of scale $\tau =\{0,0.4,0.8\}$ to evaluate the PCs' impact on the complexity of instances. The procedure is the same described in section \ref{models comparison}. As a result, the total number of instances is $27 \times 3= 81$, as each instance is solved with three proposed scales of precedence constraints $\tau \in \{0,0.4,0.8\}$. All the instances are solved using the Two-index MIP model and the three algorithm LBBD1, LBBD2, and LBBD3.

\subsection{Comparison of Two-index MIP model and three LBBD algorithms} \label{comparison}
In this section, we summarize and discuss our experimental results to evaluate the performance of the proposed algorithms in comparison with the Two-index MIP model on the sets of instances described in Section \ref{Instance generation}.

Tables \ref{table-comparison1}-\ref{table-comparison3} represents the results of experiments associated to the three proposed scales of PCs. In all the tables, column 1 displays the number of nodes, while column 2 indicates the proposed maximum number of trips for each instance. The CPU time (in seconds) is shown in columns 3, 5, 7 and 9, respectively for the Two-index model, LBBD1, LBBD2, and LBBD3. A dash in these columns indicates the proposed time limit under which the model was unable to find optimal solutions.
For each instance, the optimality gap is calculated with the following formula.

\begin{equation}\label{optimality gap}
\textit{optimality gap} = \frac{\textit{Upper bound} - \textit{Lower bound}}{\textit{Upper bound}} \times 100.
\end{equation}

Values in columns 4, 6, 8, and 10 represents the optimality gap (denoted by $\varDelta(\%)$), respectively for the Two-index model, LBBD1, LBBD2, and LBBD3.\\

\begin{table}
\caption{Computational results of the LBBD1, LBBD2, LBBD3 and the Two-index model considering $\tau =0$.}
\label{table-comparison1}
\scalebox{0.75}{
\begin{tabular}{rrrrrrrrrr}
\hline\noalign{\smallskip}
 \multicolumn {2}{c}{Instances}& \multicolumn {2}{c}{Two-index model}&\multicolumn {2}{c}{LBBD1} & \multicolumn {2}{c}{LBBD2} & \multicolumn {2}{c}{LBBD3}\\
  \noalign{\smallskip} \cline{1-2} \cline{3-4} \cline{5-6} \cline{7-8} \cline{9-10}\noalign{\smallskip}
   $n$ & $p$ & CPU time $(s)$ & $\varDelta(\%)$ & CPU time $(s)$ & $\varDelta(\%)$ & CPU time $(s)$ & $\varDelta(\%)$ &CPU time $(s)$ & $\varDelta(\%)$\\
\noalign{\smallskip}\hline\noalign{\smallskip}
 32 &5 &1804& 0  &835 & 0 &746  & 0 & 470 & 0 \\
 33 &5 &2117& 0  &794 & 0 &762  & 0 & 496 & 0 \\
 33 &6 & -  & 13 &863 & 0 &859  & 0 & 509 & 0 \\
 34 &5 & -  & 11 &807 & 0 &693  & 0 & 431 & 0 \\
 36 &5 &3943& 0  &783 & 0 &754  & 0 & 457 & 0 \\
 37 &5 & -  & 10 &915 & 0 &833  & 0 & 524 & 0 \\
 37 &6 & -  & 11 &986 & 0 &875  & 0 & 648 & 0 \\
 38 &5 & -  & 12 &887 & 0 &835  & 0 & 502 & 0 \\
 39 &5 &4820& 0  &936 & 0 &917  & 0 & 530 & 0 \\
 39 &6 & -  & 12 &973 & 0 &940  & 0 & 597 & 0 \\
 44 &6 & -  & 14 &1035& 0 &987  & 0 & 661 & 0 \\
 45 &6 & -  & 13 &1090& 0 &1006 & 0 & 735 & 0 \\
 45 &7 & -  & 14 &1271& 0 &998  & 0 & 672 & 0 \\
 46 &7 & -  & 16 &1183& 0 &1055 & 0 & 704 & 0 \\
 48 &7 & -  & 22 &1256& 0 &1107 & 0 & 682 & 0 \\
 53 &7 & -  & 24 &1319& 0 &1143 & 0 & 704 & 0 \\
 54 &7 & -  & 23 &1338& 0 &1117 & 0 & 756 & 0 \\
 55 &9 & -  & 28 &1952& 0 &1736 & 0 & 962 & 0 \\
 60 &9 & -  & 31 &1844& 0 &1771 & 0 & 1035& 0 \\
 61 &9 & -  & 29 &1996& 0 &1550 & 0 & 964 & 0 \\
 62 &8 & -  & 27 &1638& 0 &1300 & 0 & 711 & 0 \\
 63 &9 & -  & 34 & -  & 12&2644 & 0 & 805 & 0 \\
 63 &10& -  & 38 & -  & 8 &2930 & 0 & 946 & 0 \\
 64 &9 & -  & 35 &1950& 0 &1733 & 0 & 1061& 0 \\
 65 &9 & -  & 36 & -  & 10&1852 & 0 & 1143& 0 \\
 69 &9 & -  & 39 &1826& 0 &1677 & 0 & 972 & 0 \\
 80 &10& -  & 37 & -  & 14& -   & 9 & 2605& 0\\
\noalign{\smallskip}\hline
\end{tabular}}
\end{table}

\begin{table}
\caption{Computational results of the LBBD1, LBBD2, LBBD3 and the Two-index model considering $\tau =0.4$.}
\label{table-comparison2}
\scalebox{0.75}{
\begin{tabular}{rrrrrrrrrr}
\hline\noalign{\smallskip}
 \multicolumn {2}{c}{Instances}& \multicolumn {2}{c}{Two-index model}&\multicolumn {2}{c}{LBBD1} & \multicolumn {2}{c}{LBBD2} & \multicolumn {2}{c}{LBBD3}\\
  \noalign{\smallskip} \cline{1-2} \cline{3-4} \cline{5-6} \cline{7-8} \cline{9-10}\noalign{\smallskip}
   $n$ & $p$ & CPU time $(s)$ & $\varDelta(\%)$ & CPU time $(s)$ & $\varDelta(\%)$ & CPU time $(s)$ & $\varDelta(\%)$ &CPU time $(s)$ & $\varDelta(\%)$\\
\noalign{\smallskip}\hline\noalign{\smallskip}
 32 &5 &2289& 0  & 972 & 0 & 815 & 0  & 397  & 0 \\
 33 &5 &1837& 0  & 858 & 0 & 739 & 0  & 466  & 0 \\
 33 &6 &3035& 0  & 793 & 0 & 645 & 0  & 503  & 0 \\
 34 &5 &-   & 14 & 927 & 0 & 732 & 0  & 628  & 0 \\
 36 &5 &4273& 0  & 740 & 0 & 712 & 0  & 413  & 0 \\
 37 &5 &-   & 9  & 856 & 0 & 784 & 0  & 475  & 0 \\
 37 &6 &-   & 10 & 891 & 0 & 839 & 0  & 536  & 0 \\
 38 &5 &3792& 0  & 863 & 0 & 807 & 0  & 570  & 0 \\
 39 &5 &5008& 0  & 957 & 0 & 840 & 0  & 637  & 0 \\
 39 &6 & -  & 14 & 1016& 0 & 917 & 0  & 716  & 0 \\
 44 &6 & -  & 16 & 993 & 0 & 846 & 0  & 741  & 0 \\
 45 &6 & -  & 13 & 1072& 0 & 968 & 0  & 780  & 0 \\
 45 &7 & -  & 17 & 1335& 0 &1152 & 0  & 833  & 0 \\
 46 &7 & -  & 20 & 1494& 0 &1305 & 0  & 1025 & 0 \\
 48 &7 & -  & 18 & 1039& 0 & 876 & 0  & 668  & 0 \\
 53 &7 & -  & 22 & 1266& 0 &1115 & 0  & 916  & 0 \\
 54 &7 & -  & 25 & 1392& 0 &1248 & 0  & 1037 & 0 \\
 55 &9 & -  & 29 & 1773& 0 &1566 & 0  & 1243 & 0 \\
 60 &9 & -  & 33 & 1827& 0 &1681 & 0  & 1469 & 0 \\
 61 &9 & -  & 30 & 1875& 0 &1736 & 0  & 1370 & 0 \\
 62 &8 & -  & 28 & 1745& 0 &1624 & 0  & 1288 & 0 \\
 63 &9 & -  & 31 & -   &15 & -   & 9  & 2726 & 0 \\
 63 &10& -  & 35 & -   & 6 &2072 & 0  & 1655 & 0 \\
 64 &9 & -  & 29 & 1713& 0 &1405 & 0  & 1168 & 0 \\
 65 &9 & -  & 26 & -   &16 & -   & 7  & 2339 & 0 \\
 69 &9 & -  & 35 & 1645& 0 &1490 & 0  & 1173 & 0 \\
 80 &10& -  & 39 & -   &17 & -   & 11 & 2902 & 0 \\
\noalign{\smallskip}\hline
\end{tabular}}
\end{table}

\begin{table}
	\caption{Computational results of the LBBD1, LBBD2, LBBD3 and the Two-index model considering $\tau =0.8$.}
	 \label{table-comparison3}
\scalebox{0.75}{
\begin{tabular}{rrrrrrrrrr}
\hline\noalign{\smallskip}
 \multicolumn {2}{c}{Instances}& \multicolumn {2}{c}{Two-index model}&\multicolumn {2}{c}{LBBD1} & \multicolumn {2}{c}{LBBD2} & \multicolumn {2}{c}{LBBD3}\\
  \noalign{\smallskip} \cline{1-2} \cline{3-4} \cline{5-6} \cline{7-8} \cline{9-10}\noalign{\smallskip}
   $n$ & $p$ & CPU time $(s)$ & $\varDelta(\%)$ & CPU time $(s)$ & $\varDelta(\%)$ & CPU time $(s)$ & $\varDelta(\%)$ &CPU time $(s)$ & $\varDelta(\%)$\\
\noalign{\smallskip}\hline\noalign{\smallskip}
 32 &5 &1998& 0   & 950  & 0 & 923 & 0  & 551 & 0 \\
 33 &5 &2218& 0   & 981  & 0 & 576 & 0  & 352 & 0 \\
 33 &6 &3249& 0   & 883  & 0 & 806 & 0  & 714 & 0 \\
 34 &5 & -  & 16  & 956  & 0 & 851 & 0  & 693 & 0  \\
 36 &5 &4540& 0   & 722  & 0 & 647 & 0  & 478 & 0 \\
 37 &5 & -  & 12  & 826  & 0 & 713 & 0  & 639 & 0 \\
 37 &6 & -  & 8   & 875  & 0 & 822 & 0  & 540 & 0 \\
 38 &5 & -  & 14  & 836  & 0 & 783 & 0  & 633 & 0 \\
 39 &5 &4712& 0   & 972  & 0 & 852 & 0  & 689 & 0 \\
 39 &6 & -  & 17  & 990  & 0 & 964 & 0  & 616 & 0 \\
 44 &6 & -  & 15  & 1072 & 0 & 907 & 0  & 762 & 0 \\
 45 &6 & -  & 16  & 1039 & 0 & 932 & 0  & 750 & 0 \\
 45 &7 & -  & 19  & 1378 & 0 & 1105& 0  & 961 & 0 \\
 46 &7 & -  & 20  & 1422 & 0 & 1213& 0  & 1037& 0 \\
 48 &7 & -  & 23  & 1156 & 0 & 1044& 0  & 972 & 0 \\
 53 &7 & -  & 26  & 1437 & 0 & 1267& 0  & 958 & 0 \\
 54 &7 & -  & 23  & 1335 & 0 & 1105& 0  & 966 & 0 \\
 55 &9 & -  & 27  & 1822 & 0 & 1579& 0  & 1203& 0 \\
 60 &9 & -  & 38  & 1905 & 0 & 1601& 0  & 1355& 0 \\
 61 &9 & -  & 29  & 2016 & 0 & 1871& 0  & 1290& 0 \\
 62 &8 & -  & 28  & 1590 & 0 & 1358& 0  & 1083& 0 \\
 63 &9 & -  & 33  &   -  & 11&  -  & 8  & 2295& 0 \\
 63 &10& -  & 30  & 3057 & 0 & 2472& 0  & 1832& 0 \\
 64 &9 & -  & 34  & 1582 & 0 & 1338& 0  & 1172& 0 \\
 65 &9 & -  & 30  &   -  & 15&  -  & 8  & 2489& 0 \\
 69 &9 & -  & 31  & 1937 & 0 & 1824& 0  & 1362& 0 \\
 80 &10& -  & 36  &   -  & 13&  -  & 9  & 2630& 0 \\
\noalign{\smallskip}\hline
\end{tabular}}
\end{table}

The first thing that should be noticed is that the results of the three tables highlight way better performance of the LBBD3 comparing the other approaches as it is able to heuristically solve all the instances in less computational time. Comparing the two algorithms LBBD1 and LBBD2 indicates the higher efficiency of the LBBD2 capable of solving almost all instances in less time and even some instances which can not be optimally dealt with the LBBD1. This is because of the cut \eqref{cut2} used in LBBD2 which can provide tighter lower bound for the master problem than optimality cut \eqref{cut1} applied in the LBBD1. Not surprisingly, the Two-index model can only solve smaller instances with more CPU time comparing to the LBBD algorithms for all the proposed PCs scales.

Moreover, it can be observed an increasing trend in the problem complexity as the size of instances grows in all the three tables. More precisely, raising the number of nodes and the maximum number of trips may lead to more CPU time and the optimality gap, except for a few cases in which the size of instances is close together. As expected, the parameter of maximum trips number can affect the problem complexity more than the number of nodes in most cases for all the proposed solution approaches.

Considering different values of PC scale, it can be observed that the complexity of instances are not necessarily affected by increasing or decreasing the PC scale value. In some instances, larger value of PC scale leads to low computational complexity, consequently less CPU time, and in some other ones, the opposite is happened. This behaviour is the same for all the proposed approaches. For instance, let's consider the instance with 48 nodes and at most 7 trips solved using algorithm LBBD2. As it is shown, the CPU time is equal to 1107 when no PC is considered, while the time decreases once $\tau = 0.4$ and increases when $\tau = 0.8$. It seems, in this case, the instance complexity reduces considering precedence relations in comparison with the non-PC problem. As mentioned in section \ref{models comparison}, not the number of PCs but the structure of them on the graph may affect differently the problem complexity. This topic has also been addressed in \cite{Prot and Bellenguez-Morineau 2018}. According to their research, many problems can be NP-hard when considering general precedence constraints, while they become polynomially solvable for particular structures of precedence constraints.

\section{Conclusions} \label{conclusion}
In this paper, the multi-trip single vehicle routing problem is studied in which the nodes associated to customers/target locations are related to each other through AND-type precedence constraints. Our interest originates from problems, such as package delivery or picker routing problems, where some nodes have priorities to be visited after a set of other ones within and among the routes, due to their emergency, importance or any physical features. Despite of the AND-type PCs applications in real-life routing problems, non of the studies in the literature of logistics, even picker routing problems, focuses on these relations.

First, we develop and experimentally compare three mathematical formulations to address the problem. The computational results validates the significant superiority of the developed Two-index model in terms of both computational time and problem size in comparison with the two other ones. Then, the problem is handled by proposing a solution approach based on the logic based benders decomposition algorithm. The developed approach decomposes the original problem into an assignment master problem and sequencing subproblems. Moreover, a new optimality cut is provided, and its validity is proven. The performance of the optimality cut is experimentally investigated by comparing that with a recently proposed cut in the literature. Additionally, we presented a relaxed version of LBBD by defining a limit for optimality gap and CPU time in deriving master-problem solutions. In such a way the efficiency of algorithm improves as it allows the algorithm to find feasible solution of original problem in less CPU time and even for larger instances. The performance of proposed LBBD algorithms are evaluated and compared together through extensive computational experiments. The results show that the two exact LBBDs are able to solve most instances, while the relaxed version of LBBD can heuristically solve all the generated instances in shorter computational time.

Since this research is the first attempt in proposing AND-type PCs in routing area, various future topics can be explored, including different variants of vehicle routing problems and even real-life applications. Designing and developing other exact solution approaches, well-known heuristic and meta-heuristic algorithms along with enhancements in both parts of master and subproblem of the proposed LBBD algorithm are highly recommended as future research studies.

\end{document}